\newtheorem{theorem}{Theorem}[section]
\newtheorem{remark}[theorem]{Remark}
\newtheorem{condition}[theorem]{Condition}
\newtheorem{proposition}[theorem]{Proposition}
\newtheorem{lemma}[theorem]{Lemma}
\def\EE{\mathbb{E}}
\def\PP{\mathbb{P}}
\def\NN{\mathbb{N}}
\def\ind{{\rm 1\hspace{-0.90ex}1}}
\newcommand{\vep}{\varepsilon}
\newcommand{\Bin}{\mathsf{Bin}}
\def\tod{\stackrel{d}{\longrightarrow}}
\def\top{\stackrel{p}{\longrightarrow}}
\newcommand{\cU}{\mathcal{U}}
\newcommand{\cE}{\mathcal{E}}
\newcommand{\cB}{\mathcal{B}}
\newcommand{\cD}{\mathcal{D}}
\newcommand{\cA}{\mathcal{A}}
\newcommand{\cG}{\mathcal{G}}
\newcommand\bd{\mathbf{d}}
\def\tod{\stackrel{d}{\longrightarrow}}
\def\top{\stackrel{p}{\longrightarrow}}
\begin{document}

\title{A Central Limit Theorem for  Diffusion in Sparse Random Graphs}

\author{Hamed Amini\thanks{University of Florida, Department of Industrial and Systems Engineering, Gainesville, FL 32611, USA, email: {\tt aminil@ufl.edu}.} \ \ \ \ \ \ \ \
Erhan Bayraktar\thanks{Department of Mathematics, University of Michigan, 
email: {\tt erhan@umich.edu}}  \ \ \ \ \ \ \ \
Suman Chakraborty \thanks{ Department of Mathematics and Computer Science, Eindhoven University of Technology,
email: {\tt s.chakraborty1@tue.nl}}
}
\maketitle
\abstract{We consider bootstrap percolation and diffusion in sparse random graphs with fixed degrees, constructed by configuration model. Every vertex has two states: it is either active or inactive. We assume that to each vertex is assigned a  nonnegative (integer) threshold.  The diffusion process is initiated by a subset of vertices with threshold zero which consists of initially activated vertices, whereas every other vertex is inactive. Subsequently, in each round, if an inactive vertex with threshold $\theta$ has at least $\theta$ of its neighbours activated, then it also becomes active and remains so forever. This is repeated until no more vertices become activated. The main result of this paper provides a central limit theorem for the final size of activated vertices. Namely, under suitable assumptions on the degree and threshold distributions, we show that the final size of activated vertices has asymptotically Gaussian fluctuations. 
\bigskip

\noindent {\bf Keywords:} Contagion, bootstrap percolation, central limit theorem, sparse random graphs.

}

\bigskip

\section{Introduction}\label{sec:201pm15my22}
Threshold models of contagion have been used to describe many complex phenomena in diverse areas including epidemiology~\cite{newman02, amini2020epidemic, Pastor-Satorras15, morris00, watts2011simple}, neuronal activity~\cite{tlusty2009remarks, amini-nn}, viral marketing~\cite{Kleinberg07, leskovec2007dynamics, kempe03} and spread of defaults in economic networks~\cite{acm16, amini2019dynamic}.

Consider a connected graph $G=(V,E)$ with $n$ vertices $V=[n]:=\{1,2, \dots, n\}$. Given two vertices $i,j \in [n]$, we write $i \sim j$ if $\{i,j\} \in E$. Every vertex has two states: it is either active or inactive (sometimes also referred to as infected or uninfected). We assume that to each vertex is assigned a  nonnegative (integer) threshold. Let $\theta_i$ be the threshold of vertex $i$. The diffusion is then initiated by the (deterministic) subset of vertices with threshold zero which consists of initially activated vertices 
$\cA_0:=\{i\in V: \theta_i=0\}.$ Subsequently, in each round, if an inactive vertex with threshold $\theta$ has at least $\theta$ of its neighbours activated, then it also becomes active and remains so forever. Namely, at time $t\in \NN$, the (deterministic) set of active vertices is given by
\begin{equation}
\cA_{t}=\{i\in V: \sum_{j: j\sim i} \ind{\{j\in \cA_{t-1}\}} \geq \theta_i\},
\end{equation}
where $\ind{\{\cE\}}$  denotes the indicator of an event $\cE$, i.e., this is 1 if $\cE$ holds and 0 otherwise.

We study above threshold driven contagion model in sparse random graphs with fixed degrees, constructed by configuration model. The interest in this random graph model stems from its ability to mimic some of the empirical properties of real networks, while allowing for tractability (see e.g.,~\cite{Molloy95, hofstad16, durrett07}). We describe this random graph model next.

\subsection{Configuration model}

For each integer $n\in \mathbb{N}$, we consider a system of $n$ vertices $[n] :=\{1,2, \dots,n\}$ endowed with a sequence of initial non-negative integer thresholds $(\theta_{n,i})_{i=1}^n$. We are also given a sequence $\bd_n = (d_{n,i})_{i=1}^n$ of nonnegative integers
$d_{n,1},\ldots,d_{n,n}$ such that $\sum_{i=1}^n d_{n,i}$ is even. By means of the configuration model we define a random multigraph with given degree sequence, denoted by $\cG(n,\bd_n)$. It starts with $n$ vertices and $d_{n,i}$ half-edges corresponding to the $i$-th vertex. Then at each step two half-edges are selected uniformly at random, and a full edge is completed by joining them. The multigraph is constructed when there is no more half edges left. Although self-loops and multiple edges may occur, these become rare (under our regularity conditions below) as $n \to \infty$. It is easy to see conditional on the multigraph being simple graph, we obtain a uniformly distributed random graph with these given degree sequences; see e.g.~\cite{hofstad16}.

Let $u_n(d, \theta)$ be the numer of vertices with degree $d$ and threshold $\theta$:
\begin{equation*}
u_n(d, \theta) := \#\{ i \in [n]:  d_{n,i} = d, \ \theta_{n,i} = \theta \}.
\end{equation*}

We assume the following regularity conditions on the degree sequence and thresholds. 


\begin{condition}\label{cond}
For each $n\in \NN$, $\bd_n = (d_{n,i})_{i=1}^n$ and $(\theta_{n,i})_{i=1}^n$ are sequence of non-negative integers such that
$\sum_{i=1}^n d_{n,i}$ is even and, for some probability distribution $p:\NN^2\to [0,1]$ independent of $n$, with $u_n(d, \theta)$ as defined above:
\begin{enumerate}
    \item[$(C_1)$] as $n\to \infty$, 
    \begin{equation*}
    \frac{u_n(d,\theta)}{n}:=\frac{\#\{ i \in [n]:  d_{n,i} = d, \ \theta_{n,i} = \theta \}}{n} \longrightarrow p(d,\theta)
    \end{equation*}
for every non-negative integers $d$ and $\theta$. We further assume $\sum_{\theta=0}^\infty p(0,\theta)<1$.
\item[$(C_2)$] for every $A>1$, we have
\begin{equation*}
    \sum_{d= 0}^\infty \sum_{\theta=0}^\infty u_n(d,\theta)A^d = \sum_{i=1}^n A^{d_{n,i}} = O(n).
\end{equation*}
\end{enumerate}
\end{condition}

\begin{remark}
Let $(D_n,\Theta_n)$ be random variables with joint distribution $\PP(D_n=d, \Theta_n=\theta)=u_n(d, \theta)/n,$
which is the distribution of dergee and threshold of a random vertex in $\cG(n,\bd_n)$. Moreover, let $(D,\Theta)$ be random variables (over nonnegative integers) with joint distribution $\PP(D=d, \Theta=\theta)=p(d,\theta)$. Let 
\begin{equation*}
\lambda:=\EE[D]= \sum_{d= 0}^\infty \sum_{\theta=0}^\infty dp(d,\theta).
\end{equation*}
Then Condition~\ref{cond} can be rewritten as $(D_n, \Theta_n) \tod (D, \Theta)$ as $n\to \infty$ and $\EE[A^{D_n}]=O(1)$ for each $A>1$, which in particular implies the uniform integrability of $D_n$, so (as $n\to \infty$) 
\begin{equation*}
\frac{\sum_{i=1}^n d_{n,i}}{n} = \EE[D_n] \longrightarrow \EE[D]=\lambda \in (0,\infty).
\end{equation*}
Similarly, all higher moments converge.
\end{remark}

\subsection{Diffusion process in $\cG(n,\bd_n)$}
The aim of this section is to write the activation process described above as a diffusion process. 
We start with the graph $\cG(n,\bd_n)$, and then remove (here the removal of a vertex is the same as activation described above) the initially activated vertex with threshold zero, say $\cA_0$. Now the degree of each vertices in the graph induced by $[n]\setminus \cA_0$ is less than or equal to the degree of that vertex in $\cG(n,\bd_n)$. We denote the degree of vertex $i$ at time $t$ in the evolving graph by $d_{i}^B(t)$ (here the time $t$ is non-negative integer valued, later we will introduce a continuous time). In this process, we remove the vertex $i$ at time $t$ if $d_i^B(t) \leq d_i-\theta_i$ (if there are more than one such $i$, then we we select one arbitrarily). Note that at the end of this procedure all the vertices that are removed will be the set of active vertices, and the rest will remain inactive. 

Let us describe the process by assigning a type $A$ (active) or $B$ (inactive) to each of these vertices. To begin with (at time $t=0$), the initially activated vertices (with threshold zero) are said to be of type $A$, and all vertices not in the initially activated vertices are of type $B$. At time $t>0$, a vertex is said to be of type $A$ if $d_i^B(t) \leq d_i-\theta_i$, otherwise, we call it of type $B$. In particular at the beginning of the diffusion process, all vertices in initially activated vertices are of type $A$, and all vertices not in the initially activated vertices are by definition of type $B$. As we proceed with the algorithm, $d_i^B(t)$ might decrease and a type $B$ vertices may become of type $A$. 
In terms of edges, a half-edge is of type $A$ or $B$ when its endpoint is. As long as there is any half-edge of type $A$, we choose one such half-edge uniformly at random and remove the edge it belongs to. Note that it may result in changing the other endpoint from $B$ to
$A$ (by decreasing $d_i^B(t)$) and thus create new half-edges of type $A$. When there are no half-edges of type $A$ left, we stop the process and the final set of activated vertices is the set of vertices of type $A$ (which are all isolated). 

The next step is to turn this process into a balls and bins problem. In this step we simultaneously run the activation process described above and construct the configuration model. We call a type $A$ (or $B$) vertex as type $A$ (or $B$) bins and similarly consider the half-edges as balls of type $A$ (or $B$) if its end point is type $A$ (or $B$). At each step, we remove first one random ball from the set of balls in $A$-bins and then another ball without restriction. We stop when there are no non-empty $A$-bins. Therefore we alternately remove an $A$-ball and a random ball. We may just as well say that we first remove a random $A$-ball. We then remove balls in pairs, first a ball without restriction, and then a random $A$-ball, and stop with the ball, leaving no $A$-ball to remove. 

The next step is to run the above deletion process in continuous time. Each ball has an exponentially distributed random lifetime with mean $1$, independent of the other balls. In other words balls die and are removed at rate $1$, independent of each other. Also when a ball dies we remove a random $A$ ball (in terms of configuration model these two half-edges form an edge). We stop when we should remove an $A$ ball but there is no such a ball. Let $H_A(t)$ and $H_B(t)$ denote the number of $A$ balls and $B$ balls respectively, at time $t$. Note that these parameters depend on $n$, but we will omit the subscript as it is clear from the context. We also let $A_n(t)$ and $B_n(t)$ be the number of $A$ bins and $B$ bins at time $t$. 

Let $\tau_n$ be the stopping time of the above diffusion process. Note that there are no $A$ balls left at $\tau_n$. However we pretend that we delete a (nonexistent) $A$ ball at $\tau_n$-th step and denote $H_A(\tau) = -1$. Therefore the stopping time $\tau_n$ may be characterized by $H_A(\tau) = -1$, and $H_A(t) \geq 0$ for all $0\leq t < \tau_n$. Denoting by $\cA_n^*$ the final set of activated vertices, we observe that $|\cA^*_n|= n-B_n(\tau_n).$ 
Next we consider the balls only. The total number of balls at time $t$ is $H_A(t)+H_B(t)$. In the evolution of this process each ball dies with rate $1$ and another ball is removed upon its death. Therefore, $H_A(t)+H_B(t)$ is a death process with rate $2$.  

\subsection{Main theorems}
The main result of this paper provides a central limit theorem for the final size of activated vertices. Namely, when the degree and threshold sequences satisfy Condition~\ref{cond}, we show that the final size of activated vertices $|\cA^*_n|$ has asymptotically Gaussian fluctuations. 

Let
\begin{align*}
b(d,z,\ell):=& \PP(\Bin(d,z)= \ell) = \binom{d}{\ell} z^\ell (1-z)^{d-\ell}, \\
\beta(d,z,\ell):=& \PP(\Bin(d,z)\geq \ell) = \sum_{r=\ell}^d \binom{d}{r} z^r (1-z)^{d-r},
\end{align*}
and $\Bin(d,z)$ denotes the binomial distribution with parameters $d$ and $z$.

For the following theorems, we will use the notation
\begin{align*}
&\widehat{a}_n(t):= 1-{\frac{1}{n}} \sum_{d=0}^{\infty}\sum_{\theta=1}^d  u_n(d,\theta) \beta(d,e^{-t},d-\theta+1),
\end{align*}
where $u_n(d,\theta)$  is the number of vertices with degree $d$ and threshold $\theta$, and 
\begin{align*}
&\widehat{a}(t):= 1- \sum_{d=0}^{\infty}\sum_{\theta=1}^d  p(d,\theta) \beta(d,e^{-t},d-\theta+1).
\end{align*}

\begin{theorem}\label{thm_n-main1}
Assume that Condition~\ref{cond} holds. Let $\tau_n' \leq \tau_n$ be a stopping time such that $\tau_n' \top t_0$ for some $t_0 \geq 0$. Then in $\cD[0,\infty)$,  as $n\to \infty$,
\begin{align*}
&n^{-1/2}\bigl(A_n(t \wedge \tau_n')-n\widehat{a}_n(t \wedge \tau_n')\bigr) \tod Z_{A}(t \wedge t_0),
\end{align*}
for all $t\leq t_0$, where $Z_{A}$ is a continuous Gaussian process on $[0,t_0]$ with mean $0$ and variance
\begin{equation*}\label{eqn:sigma2}
\begin{aligned}
     \sigma_A^2(t):= \sum_{d=1}^{\infty}&\sum_{\theta=1}^{d} e^{-(d-\theta+1)t}\\
     &\times
     \left(\Delta_{d, \theta, d-\theta+1}(t)
    +\sum_{\ell=d-\theta+2}^{\infty}(d-\theta+1){\ell-1\choose d-\theta+1 }\int_{0}^t(e^{-s}-e^{-t})^{\ell-(d-\theta+2)}e^{-s} \Delta_{d,\theta, \ell}(s) \, ds\right),
    \end{aligned}
\end{equation*}
where 
\begin{equation*}
\Delta_{d,\theta,\ell}(t)=p(d,\theta)\left(1-e^{2\ell t}\beta(d,e^{-t},\ell) +  2\ell  \int_{0}^t e^{2\ell s}\beta(d,e^{-s},\ell) \, ds  \right).
\end{equation*} 

\end{theorem}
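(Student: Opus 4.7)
The plan is to establish this functional CLT by combining a Wormald-type differential-equation argument for the law-of-large-numbers behaviour with a martingale central limit theorem (of Rebolledo / Jacod--Shiryaev type) for the fluctuations. I would work with the continuous-time ball-and-bin Markov jump process described above and center $A_n(\cdot)$ by its predictable compensator; the resulting martingale, scaled by $n^{-1/2}$, converges to a continuous Gaussian martingale, and the compensator fluctuation around $n\widehat{a}_n(\cdot)$ contributes a further Gaussian piece via a linearization of the intensity along the LLN trajectory.

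Since $A_n(t)$ is a nondecreasing pure-jump process with unit jumps, I would begin with its Doob--Meyer decomposition
\begin{equation*}
A_n(t) = A_n(0) + \Lambda_n^A(t) + M_n^A(t), \qquad \Lambda_n^A(t) = \int_0^t \mu_n^A(s)\,ds,
\end{equation*}
where $\mu_n^A(s)$ is the activation intensity (expressible through the empirical joint measure of degree, threshold, and remaining white-ball count over surviving $B$-bins), and $M_n^A$ is a local martingale with predictable quadratic variation $\langle M_n^A\rangle = \Lambda_n^A$. A Wormald-type argument for this measure-valued state process, in the spirit of the LLN analysis that produces $\widehat{a}_n$, gives $n^{-1}\Lambda_n^A(\cdot) \to \widehat{a}_n(\cdot) - \widehat{a}_n(0)$ uniformly on compact time intervals. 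Writing
\begin{equation*}
n^{-1/2}\bigl(A_n(t) - n\widehat{a}_n(t)\bigr) = n^{-1/2}M_n^A(t) + n^{-1/2}\bigl(\Lambda_n^A(t) - n(\widehat{a}_n(t)-\widehat{a}_n(0))\bigr) + o_\PP(1),
\end{equation*}
I would treat the two leading terms jointly: to $n^{-1/2}M_n^A$ I apply Rebolledo's martingale FCLT, which reduces to verifying convergence in probability of $n^{-1}\langle M_n^A\rangle(\cdot)$ to a deterministic continuous limit together with the (automatic) Lindeberg jump condition, since $|\Delta(n^{-1/2}M_n^A)|\le n^{-1/2}$; and for the compensator fluctuation I Taylor-expand $\mu_n^A$ around the LLN trajectory, representing it as a linear functional of the centered empirical measure and invoking a joint measure-valued FCLT for the underlying state. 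The truncation at $t\wedge \tau_n'$ then follows from a continuous-mapping argument on $\cD[0,\infty)$ using $\tau_n'\to t_0$ and the tightness of the processes.

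The main obstacle will be the explicit identification of $\sigma_A^2(t)$. The leading term $\Delta_{d,\theta,d-\theta}(t)$ corresponds to the direct contribution of bins of type $(d,\theta)$ transitioning within $[0,t]$, while the integral term
\begin{equation*}
(d-\theta)\sum_{\ell > d-\theta}\binom{\ell-1}{d-\theta}\int_0^t (e^{-s}-e^{-t})^{\ell-(d-\theta)-1}e^{-s}\,\Delta_{d,\theta,\ell}(s)\,ds
\end{equation*}
captures the propagated fluctuation from bins that transitioned earlier at time $s$ while still carrying $\ell$ surviving ball-clocks: the kernel $(e^{-s}-e^{-t})^{\ell-(d-\theta)-1}e^{-s}$ is (up to normalisation) the conditional density for the $(\ell-(d-\theta))$-th of those $\ell$ residual death times to fall in $(s,t]$. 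The ODE $\partial_t \Delta_{d,\theta,\ell}(t) = p(d,\theta)\,\ell\, e^{2\ell t}\,b(d,e^{-t},\ell)$, which is a direct consequence of the identity $\partial_z \beta(d,z,\ell) = (\ell/z)\,b(d,z,\ell)$, together with careful bookkeeping of the martingale quadratic variation and the drift-linearization contributions, should recover the stated formula. Tracking both contributions in concert, and matching them to $\sigma_A^2(t)$ while preserving $\cD[0,\infty)$-convergence, is the most delicate part of the argument.
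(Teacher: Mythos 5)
Your plan is a recognizable general-purpose route (Doob--Meyer decomposition of the counting process $A_n$, Rebolledo/Jacod--Shiryaev FCLT for the scaled martingale part, and a linearization of the compensator around the LLN trajectory), but it is a genuinely different and heavier route than the one the paper takes. The paper never decomposes $A_n$ as a counting process at all; it observes that $A_n(t)=n-B_n(t)$ and that $B_n(t)=\sum_{d\geq 1}\sum_{\theta=1}^d\widetilde{B}_{d,\theta,d-\theta}(t)$, where $\widetilde{B}_{d,\theta,\ell}(t)$ counts $B$-bins of type $(d,\theta)$ with at least $\ell$ surviving white balls. Because balls die independently at rate one, each $\widetilde{B}_{d,\theta,\ell}$ is a plain (stopped) pure-death functional, and applying the exponential transform $\widehat{B}_{d,\theta,\ell}=e^{\ell t}\widetilde{B}_{d,\theta,\ell}$ produces the closed triangular system
\begin{equation*}
d\widehat{B}_{d,\theta,\ell}(t)=\ell\,e^{-t}\,\widehat{B}_{d,\theta,\ell+1}(t)\,dt+d\mathcal{M}'(t),
\end{equation*}
whose explicit variation-of-constants solution is the kernel formula you recognized. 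This makes $A_n$ a \emph{linear} functional of finitely many orthogonal martingales $M_{d,\theta,\ell}$ (they never jump simultaneously, so their cross brackets vanish), so the martingale FCLT applies coordinatewise and $\sigma_A^2$ falls out as a direct sum with no correlation bookkeeping. Your proposal, by contrast, must first establish the very measure-valued FCLT for $(B_{d,\theta,\ell}(t))_{d,\theta,\ell}$ that you later wish to ``invoke'' when linearizing $\mu_n^A$; in other words, the compensator-fluctuation step defers exactly the work that the paper's explicit recursion performs, and the nonlinear Taylor expansion is an added error term that the paper's approach simply does not incur because the $B$-side dynamics are already linear.

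Two further points you should be aware of. First, the ``continuous-mapping argument for the stopped process'' is more delicate than it sounds: the paper applies the martingale FCLT directly to the stopped martingales $\widetilde{M}_{d,\theta,\ell}(\cdot\wedge\tau_n)$, rather than stopping after the fact, precisely because the jump structure and quadratic variation bounds are what one controls, not tightness of the unstopped process on $[0,\infty)$. Second, and more importantly, nothing in your sketch controls the infinite sums over $d$. The variance series and the martingale expansion both sum over all degrees, and Condition~$(C_2)$ (the exponential moment $\sum_i A^{d_{n,i}}=O(n)$) is used in the paper via the crude bound $[\widetilde{M}_{d,\theta,\ell},\widetilde{M}_{d,\theta,\ell}]_T\leq e^{2\ell T}u_n(d,\theta)/n$ together with Doob's $L^2$ inequality to show the tails $\widetilde{R}_{\ell},\widehat{R}_{\ell}$ vanish uniformly in $n$ as the cutoff $\ell\to\infty$; without an analogous step, neither the convergence of the series defining $\sigma_A^2$ nor the interchange of limits $(n\to\infty$ against $\sum_d)$ is justified. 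Your identification of the kernel $(e^{-s}-e^{-t})^{\ell-(d-\theta)-1}e^{-s}$ as a negative-binomial propagator and of the ODE $\partial_t\Delta_{d,\theta,\ell}(t)=p(d,\theta)\,\ell\,e^{2\ell t}b(d,e^{-t},\ell)$ are both correct and show you understand where the formula comes from; the gap is that ``careful bookkeeping'' is the entire proof, and the paper's exponential-transform recursion is the device that makes that bookkeeping tractable.
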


\medskip

The proof of above theorem is provided in Section~\ref{sec:proofMain}. 

\medskip

Let us define 
\begin{align*}
h_B(z) :=\sum_{d=1}^{\infty}\sum_{\theta=1}^{d} p(d,\theta) \sum_{\ell=d-\theta+1}^d \ell b(d,z,\ell) \ \ \text{and} \ \ h_A(z):=\lambda  z^2 - h_B(z),
\end{align*}
which also gives us the plausible candidate for the limit of our stopping time
\begin{equation*}
\widehat{z} :=\sup\{z\in [0,1]:  h_A(z)=0 \}.
\end{equation*}
Note that the solution $\widehat{z}$ always exists since $h_A(0)=0$. Also, $h_A(z)$ is continuous in $z$. We will further assume that if $\widehat{z}\neq 0$, then it is not a local minimum of $h_A(z)$, i.e., $\alpha:=h'_A(\widehat{z})>0$.

\begin{lemma}\label{lem:stop}
Let $\tau_n$ be the stopping time of the diffusion process such that $H_A(\tau_n) = -1$ for the first time.
Then $\tau_n \top -\ln \widehat{z}$ as $n\rightarrow \infty$, where $\top$ denotes the convergence in probability.
\end{lemma}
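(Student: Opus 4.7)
The plan is to combine a functional law of large numbers for $H_A(t)/n$ and $H_B(t)/n$ on compact time intervals with a hitting-time argument based on the defining property of $\widehat z$.

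\emph{Fluid limits.} The total white-ball count $W(t):=H_A(t)+H_B(t)$ is a pure death process: events happen at rate $W(t)$ and each removes two white balls. A standard Doob/martingale argument gives $\sup_{t\in[0,T]}|W(t)/n-\lambda e^{-2t}|\top 0$ for every $T>0$. For $H_B(t)$, the key observation is that type-$B$ half-edges can leave the white set only through the ringing of their own independent exp$(1)$ clocks, since recolouring acts only on type-$A$ half-edges and a node stays type $B$ until enough of its half-edges have been matched (type $A$ being absorbing). Thus for a fixed node of type $(d,\theta)$, the indicator that it is still type $B$ at time $t$ and the number of its white half-edges at time $t$ are both functions of its $d$ i.i.d.\ exponential clocks, and a first-moment plus variance computation using Condition~\ref{cond} yields $H_B(t)/n \top h_B(e^{-t})$ uniformly on $[0,T]$. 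Combining,
\begin{equation*}
\sup_{t\in[0,T]}\bigl|H_A(t)/n-\phi(t)\bigr|\top 0,\qquad \phi(t):=\lambda e^{-2t}-h_B(e^{-t}),
\end{equation*}
and $\phi$ is continuous.

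\emph{Hitting-time argument.} Writing $z=e^{-t}$, the function $\phi$ vanishes exactly at the zeros of $\lambda z^2-h_B(z)$, and by definition $\widehat z$ is the largest such zero. Since $\phi(0)=\lambda-h_B(1)$ equals the normalised total degree of initially active nodes, which is strictly positive under $(C_1)$, continuity forces $\phi>0$ throughout $[0,-\ln\widehat z)$. Fix $\varepsilon>0$. For the lower bound, set $t_0:=-\ln\widehat z-\varepsilon$ and $m:=\min_{t\in[0,t_0]}\phi(t)>0$; on $\{\tau_n\le t_0\}$ one would have $H_A(\tau_n)=-1$, contradicting the uniform closeness of $H_A/n$ to $\phi\ge m$ on $[0,t_0\wedge\tau_n]$, so $\tau_n>t_0$ with high probability. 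For the upper bound, the assumption that $\widehat z$ is not a local minimum of $\lambda z^2-h_B(z)$ furnishes some $t_1\in(-\ln\widehat z,-\ln\widehat z+\varepsilon)$ with $\phi(t_1)<0$; on $\{\tau_n>t_1\}$, $H_A(t_1)\ge 0$, contradicting uniform closeness to $\phi(t_1)<0$, so $\tau_n\le t_1$ with high probability. Letting $\varepsilon\downarrow 0$ concludes $\tau_n\top -\ln\widehat z$.

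The main technical obstacle is the uniform fluid-limit statement for $H_B(t)/n$: it is a sum of contributions from nodes whose degrees may be very large, so one must use the exponential-moment bound $(C_2)$ to control its variance uniformly in $t\in[0,T]$, typically by truncating the degree distribution and executing a direct second-moment computation over the remaining degree classes. A subtler, conceptual point is the non-local-minimum hypothesis on $\widehat z$: without it, $\phi$ could merely touch zero at $-\ln\widehat z$ from above and remain non-negative immediately afterwards, allowing $\tau_n$ to overshoot $-\ln\widehat z$ by a macroscopic amount; the hypothesis is precisely what enforces the strict downward crossing required for the upper-bound step.
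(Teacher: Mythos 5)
Your proposal is correct and follows essentially the same route as the paper: the pure-death limit for $W(t)=H_A(t)+H_B(t)$ together with a per-degree-class concentration argument for $H_B(t)$ gives the uniform fluid limit $H_A(t)/n\top\phi(t)=\lambda e^{-2t}-h_B(e^{-t})$, and $\tau_n$ is then squeezed between $-\ln\widehat z\mp\vep$ by the strict positivity of $\phi$ on $[0,-\ln\widehat z)$ for the lower bound and the strict negativity just past $-\ln\widehat z$ supplied by the non-local-minimum hypothesis for the upper bound, exactly as in the paper's argument. One small inaccuracy worth fixing: $\phi(0)=\sum_d d\,p(d,0)$ is not guaranteed positive by $(C_1)$ alone (that condition only rules out $D\equiv 0$, not $p(d,0)=0$ for all $d\geq 1$); if it vanishes then $\widehat z=1$ and the lemma is trivially true with $\tau_n\top 0$, so this degenerate case should be separated rather than asserted away.
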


The proof of lemma is provided in Section~\ref{sec:proofLem}. 

\medskip

We are now interested in the final number of activated vertices (bins) which is given by $\lim_{t \rightarrow \infty}A_n(t \wedge \tau_n)$. Note that from Theorem~\ref{thm_n-main1} with the stopping time $\tau_n$ (as in Lemma~\ref{lem:stop}), we have for $t\geq -\ln{\widehat{z}}$,
\begin{align*}
&n^{-1/2}\left(A_n(t \wedge \tau_n)-n\widehat{a}_n(t \wedge \tau_n)\right) \rightarrow Z_{A}(t \wedge -\ln{\widehat{z}}).
\end{align*}
Therefore for $t\geq \ln{\widehat{z}}$,
\begin{align*}
&n^{-1/2}\left(A_n(t \wedge \tau_n)-n\widehat{a}_n(t \wedge \tau_n)\right) \tod Z_{A}(-\ln{\widehat{z}}).
\end{align*}

The following theorem provides a central limit theorem for the final size of activated vertices. Let us denote by
$$
h_A^n(z) :=  \frac{\sum_{i=1}^n d_{n,i}}{n}z^2-\frac{1}{n}\sum_{d=1}^{\infty}\sum_{\theta=1}^{\infty}\sum_{\ell=d-\theta+1}^{d}\ell u_n(d,\theta) b(d,z,\ell),
$$
and let $\widehat{z}_n$ be the largest $z\in[0,1]$ such that $h_A^n(z)=0$.

\begin{theorem}\label{prop:645pm17may22}
Assume Condition~\ref{cond}. Let $\widehat{\tau}=-\ln{\widehat{z}}$ and $\widehat{\tau}_n=-\ln{\widehat{z}_n}$. We have:
\begin{itemize}
\item If $\widehat{z}=0$ then asymptotically almost all nodes become active and $|\cA^*_n|=n-o_p(n)$.
\item If $\widehat{z}\neq0$ and $\widehat{z}$ is a stable solution, i.e. $\alpha:=h'_A(\widehat{z})>0$, then 
\begin{equation*}
n^{-1/2}\left(|\cA^*_n|-  n\widehat{a}_n(\widehat{\tau}_n)\right) \tod Z_{\text{Act}},
\end{equation*}
where $Z_{\text{Act}}= \frac{{\widehat{a}'}(\widehat{\tau})}{\alpha} Z_{HA}(\widehat{\tau})+ Z_A(\widehat{\tau})$, where $(Z_{HA}(\widehat{\tau}),  Z_A(\widehat{\tau}))$ jointly follow a Gaussian distribution that is described in Proposition \ref{prop:446pm17may22}.  

\end{itemize}

\end{theorem} 

The proof of above theorem is given in Section~\ref{proof:prop:645pm17may22}, where we first derive a joint functional central limit theorem for the processes $(A_n(t),H_A(t))$, from which we derive the theorem.

\subsection{Relation to bootstrap percolation and $k$-core}
We now discuss our results with respect to related literature. 

The diffusion model we consider in this paper can be seen as a generalization of bootstrap percolation and $k$-core in any graph $G=(V,E)$. 

In the case of equal thresholds ($\theta_i=\theta$ over all vertices), our model is equivalent to bootstrap percolation (with deterministic initial activation). This process was introduced by Chalupa, Leath and Reich~\cite{CLR79} in 1979 as a simplified model of some magnetic disordered systems. A short survey regarding applications of bootstrap percolation processes can be found in~\cite{Adl03}. Recently, bootstrap percolation has been studied on varieties of random graphs models, see e.g., \cite{janson2012bootstrap} for random graph $G_{n,p}$, \cite{fountoulakis2018phase, amini2014bootstrap, aminibootstrapIRG} for inhomogeneous random graphs and \cite{balpit07, amini10, lelarge12b} for the configuration model; see also~\cite{liu2012core, liu2016control, newman2006structure, boccaletti2006complex, dorogovtsev2008critical}.

The $k$-core of a graph $G$ is the largest induced subgraph of $G$ with minimum vertex degree at least $k$. The $k$-core of an arbitrary finite
graph can be found by removing vertices of degree less than $k$, in an arbitrary order, until no such vertices exist. By setting the threshold of vertex $i$ as $\theta_i=(d_i-k+1)_+=\max\{d_i-k+1,0\}$, we find that $B_n(\tau_n)$ will be the size of $k$-core in the random graph $G(n,\bd_n)$. 

The questions concerning the existence, size and structure of the $k$-core in random graphs, have attracted a lot of attention over the last few decades, see e.g., \cite{pittel1996sudden, luczak1991size} for random graph $G_{n,p}$, \cite{bayraktar2020k, riordan2008k} for inhomogeneous random graphs and  \cite{janson2007simple, molloy2005cores, cooper2004cores, janson2008asymptotic, fernholz2004cores} for the configuration model.

In particular, more closely related to our paper, \cite{janson2008asymptotic}  analyze the asymptotic normality of the $k$-core  for sparse random graph $G_{n,p}$ and for configuration model.  We continue on the same line as \cite{janson2008asymptotic} and generalize partly their results by allowing different threshold levels to each of vertices. Our proof technique is also inspired by \cite{janson2008asymptotic}. In particular, we look at the spread of activation (or infection) and constructing the configuration model simultaneously. Then we express the number of inactive vertices at a particular time point in terms of a martingale. After that we appeal to a martingale limit theorem from \cite{jacod2013limit} to derive the limiting distribution.

\paragraph{Notation.} We let $\NN$ be the set of nonnegative integers. 
Let $\{ X_n \}_{n \in \NN}$ be a sequence of real-valued random variables on a probability space
$ (\Omega, \mathbb{P})$. 
If $c \in \mathbb{R}$ is a constant, we write $X_n \stackrel{p}{\longrightarrow} c$ to denote that $X_n$ converges in probability to $c$.
That is, for any $\epsilon >0$, we have $\mathbb{P} (|X_n - c|>\epsilon) \rightarrow 0$ as $n \rightarrow \infty$.
We write $X_n = o_{p} (a_n)$, if $|X_n|/a_n$ converges to 0 in probability. We use $\tod$ for convergence in distribution.
If $\mathcal{E}_n$ is a measurable subset of $\Omega$, for any $n \in \NN$, we say that the sequence
$\{ \mathcal{E}_n \}_{n \in \mathbb{U}}$ occurs with high probability ({\bf w.h.p.}) if $\mathbb{P} (\mathcal{E}_n) = 1-o(1)$, as
$n\rightarrow \infty$.
Also, we denote by 
$\Bin (k,p)$ a binomial  distribution  corresponding to the number of
successes of a sequence of $k$ independent Bernoulli trials each having probability of success  $p$.
We will suppress the dependence of parameters on the size of the network $n$, if it is clear from the context. We use the notation $\ind{\{\cE\}}$ for the indicator of an event $\cE$ which is 1 if $\cE$ holds and 0 otherwise. We let $\cD[0,\infty)$ be the standard space of right-continuous functions with left limits on $[0,\infty)$ equipped with the Skorohod topology (see e.g.~\cite{jacod2013limit, kallenberg1997foundations})

\section{Preliminaries}

In this section we provide some preliminary lemmas that will be used in our proofs. 

\subsection{Some death process lemmas}
Consider a pure death process with rate 1. This process starts with some number of balls whose lifetimes are i.i.d. rate 1 exponentials. 
\begin{lemma}[Death Process Lemma] \label{lem_n-death}
Let $N_n(t)$ be the number of balls alive at time $t$ in a rate $1$ death process with $N_n(0)=n$. Then
\[
\sup_{t\geq 0}|N_n(t)/n - e^{-t}| \top 0 \text{  as  }n\rightarrow \infty. 
\]
\end{lemma}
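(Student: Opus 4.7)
The plan is to realize the death process through its underlying i.i.d.\ lifetimes: assign to ball $i\in[n]$ an independent $\mathrm{Exp}(1)$ variable $X_i$, so that
\[
N_n(t) = \sum_{i=1}^n \ind{\{X_i > t\}}, \qquad \frac{N_n(t)}{n} = \frac{1}{n}\sum_{i=1}^n \ind{\{X_i > t\}}.
\]
Thus $N_n(t)/n$ is the empirical survival function of $n$ i.i.d.\ $\mathrm{Exp}(1)$ samples, while $e^{-t} = \PP(X_1>t)$ is the true survival function. The lemma is therefore a specialization of the Glivenko--Cantelli theorem to the exponential distribution, and a short self-contained argument using monotonicity and pointwise LLN suffices.

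First I would fix $\varepsilon>0$ and select a finite grid $0 = t_0 < t_1 < \cdots < t_K$ with $e^{-t_{k}} - e^{-t_{k+1}} < \varepsilon/2$ for $k=0,\dots,K-1$ and $e^{-t_K} < \varepsilon/2$; roughly $K=O(1/\varepsilon)$ points suffice. The key observation is that both $t\mapsto N_n(t)/n$ and $t\mapsto e^{-t}$ are nonincreasing, so the oscillation of $e^{-t}$ on $[t_k,t_{k+1}]$ is at most $\varepsilon/2$ by construction. This reduces uniform control over $t\geq 0$ to pointwise control at the finitely many grid points $t_0,\dots,t_K$, plus a tail estimate.

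Second, for each fixed $t_k$, the variable $N_n(t_k)$ is $\Bin(n,e^{-t_k})$, so by Chebyshev's inequality (since $\Var(N_n(t_k)/n) = e^{-t_k}(1-e^{-t_k})/n \to 0$) we have $N_n(t_k)/n \top e^{-t_k}$. A union bound over the finite grid yields that, with probability $1-o(1)$,
\[
\max_{0\leq k\leq K}\bigl|N_n(t_k)/n - e^{-t_k}\bigr| < \varepsilon/2.
\]
Now take any $t\geq 0$. If $t\in[t_k,t_{k+1}]$ with $k<K$, monotonicity gives $N_n(t_{k+1})/n \leq N_n(t)/n \leq N_n(t_k)/n$ and $e^{-t_{k+1}} \leq e^{-t} \leq e^{-t_k}$, so $|N_n(t)/n - e^{-t}|$ is bounded by $(e^{-t_k} - e^{-t_{k+1}}) + \max_k|N_n(t_k)/n - e^{-t_k}| < \varepsilon$. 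If $t > t_K$, then $0\leq N_n(t)/n \leq N_n(t_K)/n < e^{-t_K}+\varepsilon/2 < \varepsilon$ and $0\leq e^{-t} < \varepsilon/2$, whence $|N_n(t)/n - e^{-t}| < \varepsilon$. Taking $\varepsilon\downarrow 0$ gives $\sup_{t\geq 0}|N_n(t)/n - e^{-t}| \top 0$.

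There is no serious obstacle; the only delicate point is handling the tail $t\to\infty$, which is resolved because both curves decay to $0$ and can be made uniformly small outside a compact set. If almost sure (rather than in-probability) convergence is desired, the same argument applies verbatim with the strong law of large numbers replacing Chebyshev, which is the standard proof of Glivenko--Cantelli.
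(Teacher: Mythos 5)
Your proof is correct and uses the same key reduction as the paper: realize $N_n(t)/n$ as the empirical survival function of $n$ i.i.d.\ $\mathrm{Exp}(1)$ lifetimes, so that the claim is exactly the Glivenko--Cantelli theorem for the exponential law. The paper simply cites Glivenko--Cantelli at this point, whereas you reprove it inline via the standard grid-plus-monotonicity argument; the substance is the same, and your version is just more self-contained.
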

\begin{proof}
 $1-N_n(t)/n$ is the empirical distribution function of the $n$ lifetimes, which are i.i.d. random variables with the distribution function $1-e^{-t}$. Therefore the result follows using Glivenko-Cantelli theorem (see e.g. \cite[Proposition 4.24]{kallenberg1997foundations}).
\end{proof}
\begin{lemma}[Number of Balls Centrality Lemma]
The number of balls $H_A(t)+H_B(t)$ follow a pure death process, and  
\begin{align*}
\sup_{0\leq t\leq \tau_n}|H_A(t)+H_B(t)-n\lambda e^{-2t}| = o_p(n).
\end{align*}
\end{lemma}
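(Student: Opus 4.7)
The plan is to identify the white-ball process $W(t) := H_A(t) + H_B(t)$ as, up to time change, a standard pure death process, and then to combine the Death Process Lemma with convergence of the empirical mean degree. To set this up, I would first read off the one-step dynamics of $W(t)$ from the continuous-time construction: each of the $W(t)$ currently white balls holds an independent $\mathrm{Exp}(1)$ clock, so the rate at which some white ball expires is $W(t)$, and each such expiry simultaneously removes the dying ball and recolors a random white $A$-ball red. Hence $W(t)$ decreases by exactly $2$ per event (deaths of red balls leave $W$ unchanged), so $W$ is a right-continuous pure-jump Markov process with rate $W(t)$ for a jump of size $-2$. Since $W(0) = \sum_i d_{n,i}$ is even, the rescaling $V(t) := W(t)/2$ produces a pure death process in which each of its $V(0) = W(0)/2$ initial particles dies independently at rate $2$; equivalently, $V(t)$ has the same law as $\tilde N(2t)$, where $\tilde N$ is a standard rate-$1$ death process started with $W(0)/2$ particles.

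Applying the Death Process Lemma to $\tilde N$ gives $\sup_{s \geq 0}|\tilde N(s)/\tilde N(0) - e^{-s}| \to 0$ in probability, and substituting $s = 2t$ yields $\sup_{t \geq 0}|W(t)/W(0) - e^{-2t}| \stackrel{p}{\longrightarrow} 0$. To convert this into the stated bound I would invoke Condition~\ref{cond}: $(C_1)$ gives $D_n \tod D$, and the exponential moment bound in $(C_2)$ supplies uniform integrability, so $W(0)/n = \EE[D_n] \to \lambda$. The triangle inequality
\begin{equation*}
\Bigl|\tfrac{W(t)}{n} - \lambda e^{-2t}\Bigr| \leq \tfrac{W(0)}{n}\, \Bigl|\tfrac{W(t)}{W(0)} - e^{-2t}\Bigr| + \Bigl|\tfrac{W(0)}{n} - \lambda\Bigr| e^{-2t},
\end{equation*}
combined with the uniform convergence above, then gives $\sup_{t \geq 0}|W(t) - n\lambda e^{-2t}| = o_p(n)$, which is strictly stronger than the supremum over $t \leq \tau_n$ claimed in the lemma.

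The entire argument is essentially bookkeeping on top of the Death Process Lemma; the only point that requires care is the opening observation that $W$ jumps down by $2$ rather than by $1$ at each event, so that the correct comparison object is a rate-$2$ (equivalently, time-changed rate-$1$) death process on $W(0)/2$ particles. Once that identification is made, both the application of the Death Process Lemma and the convergence $W(0)/n \to \lambda$ via uniform integrability are routine, and no fact beyond Condition~\ref{cond} is needed.
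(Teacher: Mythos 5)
Your argument is correct and takes the same route as the paper: both recognize $W(t)=H_A(t)+H_B(t)$ as a rate-$2$ death process (each expiry removes two white balls, so $W$ jumps by $-2$ at rate $W$), then apply the Death Process Lemma after the time change $s=2t$ together with $W(0)/n\to\lambda$ from Condition~\ref{cond}. One small overreach to flag: your closing claim that the bound holds over all $t\geq 0$ is not quite right, since the pair-removal dynamics that make $W$ a rate-$2$ death process only operate up to the stopping time $\tau_n$; the coupling with $\tilde N(2t)$ in fact yields the supremum over $t\leq\tau_n$, which is exactly what the lemma asserts.
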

\begin{proof} In the evolution of this process each ball dies with rate $1$ and another ball is removed upon its death. Therefore $A(t)+B(t)$ is a death process with rate $2$. Therefore the lemma follows using Lemma \ref{lem_n-death}.
\end{proof}

\subsection{Martingale limit theorems}
We recall some martingale theory that are going to be useful in proving Theorem~\ref{thm_n-main1}. Let $X$ be a martingale defined on $[0,\infty)$, we denote its quadratic variation of $X$ by $[X,X]_t$, and the bilinear extension of quadratic variation to two martingales $X$ and $Y$ by $[X,Y]_t$. If $X$ and $Y$ be two martingales with path-wise finite variation, then 
\begin{equation}\label{eqn:monoct7230pm}
[X,Y]_t :=\sum_{0<s\leq t}{ \Delta X(s)\Delta Y(s)},
\end{equation}
where $\Delta X(s):= X(s) -X(s-)$ is the jump of $X$ at $s$. Similarly, $\Delta Y(s):= Y(s) -Y(s-)$. In our context there will only be countable number of jumps for the martingales under consideration, and  the sum in Equation~\eqref{eqn:monoct7230pm} will be finite. We will assume $[X,Y]_0 = 0$. For vector-valued martingales $X = (X_i)_{i=1}^m$ and $Y = (Y_i)_{i=1}^n$, we define the square bracket $[X,Y]$ to be the matrix $([X_i,Y_j])_{i,j}$. A real-valued martingale $X(s)$ on $[0, t]$ is an $L^2$ if and only if
$\mathbb{E}[X,X]_t <\infty$ and $\mathbb{E}|X(0)|^2 <\infty$, and then $\mathbb{E}|X(t)|^2 = \mathbb{E}[X,X]_t+\mathbb{E}|X(0)|^2$. We will use the following martingale limit theorem from~\cite{jacod2013limit}, see also~\cite[Proposition 4.1]{janson2008asymptotic}.
\begin{proposition}\label{prop:martingale}
For each $n\geq 1$, let $M_{n}(t)=(M_{ni}(t))_{i=0}^q$ be a $q$-dimensional martingale on $[0,\infty)$ and $M_n(0)=0$. Also $\Sigma(t)$ be a continuous positive semi-definite function such that for every fixed $t\geq 0$
\begin{align*}
[M_n,M_n]_t \top \Sigma(t)  \ \ \text{as} \ \ n\to \infty, \\
\sup_{n}\EE[M_{ni},M_{ni}]_t<\infty, \ \ i=1,\dots, q.
\end{align*}
Then $M_n\tod M$, in $\cD[0,\infty)$ where $M$ is continuous $q$-dimensional Gaussian martingale with $\mathbb{E} M(t)=0$ and covariances 
$\mathrm{Cov}(M(t))= \Sigma(t)$.
\end{proposition}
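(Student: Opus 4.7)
My plan is to deduce the statement from the standard martingale functional central limit theorem for semimartingales (as formulated for instance in~\cite{jacod2013limit}), by verifying its three hypotheses in the present setting: tightness of $(M_n)$, convergence of the matrix quadratic covariation to a continuous deterministic limit, and asymptotic vanishing of the jumps. Once those three are in place the limit is automatically a continuous Gaussian martingale whose covariance structure is read off from $\Sigma$.

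First, I would establish tightness of $(M_n)_{n\ge 1}$ in $\cD[0,\infty)$. Each coordinate $M_{ni}$ is a square-integrable martingale starting at $0$, so Doob's $L^2$ inequality gives $\EE\sup_{s\le t}M_{ni}(s)^{2}\le 4\,\EE[M_{ni},M_{ni}]_t$; the uniform bound $\sup_n \EE[M_{ni},M_{ni}]_t<\infty$ therefore makes $M_n(t)$ tight at every fixed $t$. Combined with the hypothesis $[M_n,M_n]_t \top \Sigma(t)$ and the continuity of the deterministic function $\Sigma$, Rebolledo's criterion (equivalently Aldous' criterion applied through the predictable quadratic variation) yields tightness on $\cD[0,\infty)$.

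Next, I would verify that the jumps of $M_n$ vanish in the limit. The key observation is that $[M_{ni},M_{ni}]_t$ is non-decreasing in $t$ and, by hypothesis, converges in probability pointwise to the continuous function $\Sigma_{ii}(t)$. A Dini-type argument for monotone convergence to a continuous limit, applied pathwise after extracting an almost surely convergent subsequence, promotes this to uniform convergence on every compact interval in probability. Since
\begin{equation*}
\sup_{s\le t}\bigl(\Delta M_{ni}(s)\bigr)^{2}=\sup_{s\le t}\Delta [M_{ni},M_{ni}]_s,
\end{equation*}
and a monotone c\`adl\`ag function that converges uniformly to a continuous limit must have its jumps tending to zero, this forces $\sup_{s\le t}|\Delta M_{ni}(s)| \top 0$. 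Polarisation through
\begin{equation*}
[M_{ni},M_{nj}] = \tfrac{1}{4}\bigl([M_{ni}+M_{nj},M_{ni}+M_{nj}]-[M_{ni}-M_{nj},M_{ni}-M_{nj}]\bigr)
\end{equation*}
extends the conclusion to all entries, and in particular yields the Lindeberg-type condition
\begin{equation*}
\sum_{s\le t}|\Delta M_n(s)|^{2}\,\ind{\{|\Delta M_n(s)|>\varepsilon\}} \top 0
\end{equation*}
for every $\varepsilon>0$ and $t\ge 0$, ruling out any compensating jump component in the limiting semimartingale.

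With these pieces in hand, the Jacod--Shiryaev functional limit theorem delivers $M_n \tod M$ in $\cD[0,\infty)$, where $M$ is the unique continuous local martingale with $M(0)=0$ and $[M,M]_t = \Sigma(t)$ for every $t\ge 0$. Since $\Sigma$ is deterministic, L\'evy's characterisation identifies $M$ as a centred continuous Gaussian martingale with independent increments and $\mathrm{Cov}(M(t))=\Sigma(t)$, which is exactly the object asserted in the proposition. The step I expect to require the most care is the jump-vanishing argument: the hypothesis only gives pointwise-in-$t$ convergence of the quadratic covariation, and one must upgrade this to uniform control on compact intervals in order to control $\sup_{s\le t}|\Delta M_n(s)|$. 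It is precisely the combination of monotonicity of the diagonal entries $[M_{ni},M_{ni}]$ and the assumed continuity of $\Sigma$ that makes this upgrade clean, and without either of these the argument would have to be considerably more delicate.
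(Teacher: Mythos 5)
The paper does not actually prove this proposition: it is quoted as a known result from Jacod--Shiryaev, with a pointer to \cite[Proposition 4.1]{janson2008asymptotic}, so there is no in-paper argument to compare yours against. Judged on its own terms, your sketch is a correct reconstruction of the standard proof of that cited theorem, and it follows essentially the same route as the textbook sources: Doob's $L^2$ inequality plus the uniform bound on $\EE[M_{ni},M_{ni}]_t$ for stochastic boundedness, Rebolledo/Aldous for tightness, the monotonicity-plus-continuity (Dini/P\'olya) upgrade from pointwise to locally uniform convergence of $[M_n,M_n]$, the identity $\Delta[M_{ni},M_{ni}]_s=(\Delta M_{ni}(s))^2$ to kill the jumps and hence verify the Lindeberg condition, and finally the identification of a continuous local martingale with deterministic continuous bracket as a centred Gaussian process with independent increments. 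Two small points of precision: the last step is not literally L\'evy's characterisation (which gives Brownian motion when $[M,M]_t=t$) but its standard generalisation, proved e.g.\ by applying It\^o's formula to $\exp(i\langle u,M(t)\rangle)$ to get a deterministic characteristic function; and the Lindeberg condition really only needs the vanishing of the diagonal jumps, since $|\Delta M_n(s)|^2=\sum_i(\Delta M_{ni}(s))^2$, so the polarisation step, while harmless, is not needed there (it is needed only to identify the off-diagonal entries of the limiting covariance). Neither affects the validity of the argument.
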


In the next section, we will apply Proposition~\ref{prop:martingale} to stopped processes.

\section{Proofs}\label{sec:proof}
In this section we present the proofs of Theorem~\ref{thm_n-main1}, Lemma~\ref{lem:stop} and Theorem~\ref{proof:prop:645pm17may22}.
\begin{remark}
In the proof of Theorem~\ref{thm_n-main1}, we always consider the processes up to time $\tau_n'\leq \tau_n$. Although, sometimes it will be possible to extend the process (for example by removing non-existent balls), that will not be relevant for us, and thus we will always stop the process at $\tau_n'$.
\end{remark}
\subsection{Proof of Theorem~\ref{thm_n-main1}}\label{sec:proofMain}
We denote the number of (alive) balls at time $t$ by $W_n(t)$. Clearly $W_n(t)= H_A(t) +H_B(t)$. Let $m_n:=\frac{1}{2} \sum_{i=1}^n d_{n,i}$ denotes the total number of edges in $\cG(n,\bd_n)$. In our construction $W_n(0)=2m_n-1$, and $W_n$ decreases by $2$ each time a ball dies. The death happens with rate $1$, and therefore $W_n(t)+\int_{0}^t 2W_n(s)\, ds$ is a martingale on $[0,\tau_n']$. In differential form 
\begin{equation}\label{eqn:monoct7351pm}
\, dW_n(t) = -2W_n(t)\, dt +\, d\mathcal{M}(t),
\end{equation}
where $\mathcal{M}$ is a martingale. Now by Ito's lemma and \eqref{eqn:monoct7351pm},
\[
\, d(e^{2t} W_n(t)) = e^{2t} \, dW_n(t) +2e^{2t} W_n(t) \, dt = e^{2t} \, d\mathcal{M}(t), 
\]
which implies that $\widehat{W}_n(t) : = e^{2t} W_n(t)$ is another martingale. Note that distinct balls die at distinct time with probability one, also all jumps in $W_n(t)$ equals $-2$. The quadratic variation of $\widehat{W}_n(t)$ is given by
\begin{align}\label{eqn:monoct7736pm}
[\widehat{W}_n, \widehat{W}_n]_t &= \sum_{0<s\leq t} {|\Delta\widehat{W}_n(s)|}^2 = \sum_{0<s\leq t} \left(e^{2s}|\Delta{W}_n(s)|\right)^2 \notag \\
& = \sum_{0<s \leq t} e^{4s} ({W}_n(s-)- W_n(s)) ({W}_n(s-)- W_n(s)) \notag \\
&= \sum_{0<s \leq t} 2e^{4s} ({W}_n(s-)- W_n(s)) = \int_{0}^{t} 2 e^{4s} \, d(-W_n(s)) \notag \\
&= -2e^{4t} W_n(t) +2W_n(0) +\int_{0}^t 8 e^{4s}W_n(s) \, ds.
\end{align}
Using the fact that $W_n(t)$ is a decreasing function in $t$ and $W_n(0) = 2m_n-1$ we get
\begin{align*}
[\widehat{W}_n, \widehat{W}_n]_t \leq 2e^{4t} \int_{0}^t \,d(-W_n(s)) \leq 4 m_n e^{4t}.
\end{align*}

Let the stopped martingale on $[0,\infty)$ be $W_n^*(t) := \frac{1}{n}\widehat{W}(t \wedge \tau_n')$. Then $W_n^*$ is a martingale on $[0,\infty)$ and, for every $T<\infty$, the quadratic variation of this martingale is
\begin{equation}\label{eqn:frifeb21325pm}
[W_n^*, W_n^*]_T = \frac{1}{n^2} [W_n, W_n]_{T \wedge \tau_n'} \leq \frac{4e^{4T}m
_n}{n^2} \longrightarrow 0,
\end{equation}
as $n$ goes to infinity. Also $0\leq W_n^*(t) \leq 2m_n/n = O(1)$. Therefore by Proposition \ref{prop:martingale} on $\cD[0,\infty)$, 
$W_n^*(t) - W_n^*(0) \top 0$ uniformly on $[0,T]$, and as $n\to \infty$,
\begin{equation}\label{eqn:oct151728}
n^{-1} \sup_{0\leq t\leq T \wedge \tau_n'} |W_n(t)- W_n(0)e^{-2t}| \longrightarrow 0.
\end{equation}

Now using \eqref{eqn:monoct7736pm}, \eqref{eqn:frifeb21325pm} and \eqref{eqn:oct151728}  we get for every $t \in [0, T\wedge \tau_n']$,
\begin{align}\label{eqn:monoct7746pm}
[\widehat{W}_n, \widehat{W}_n]_t &= -2e^{4t}W_n(0) e^{-2t} +2W_n(0)+ \int_{0}^t 8 e^{4s}W_n(0) e^{-2s} \, ds +o_p(n) \notag\\
&=2W_n(0)(e^{2t}-1) +o_p(n) = 2(2m_n-1)(e^{2t}-1) +o_p(n) \notag\\
&= 2\lambda n (e^{2t}-1) +o_p(n).
\end{align}

Note that in the last display, \eqref{eqn:oct151728} ensures that we can use the approximation $W_n(t)= W_n(0)e^{-2t}+o_p(n)$ on $[0, T\wedge \tau_n']$.  Let $W_i(t)$ be the number of balls for vertex (bin) $i$ and denote by (for $d,\theta, \ell\in \NN$)
$$\mathcal{B}_{d, \theta,\ell}(t) = \{i \in [n]: d_i= d, \theta_i=\theta,  W_i(t)= \ell\},$$ 
the set of vertices (bins) with degree $d$, threshold $\theta$ and $\ell$ balls at time $t$.  Let $B_{d, \theta,\ell}(t)=|\mathcal{B}_{d, \theta,\ell}(t)|$. Therefore, the total number of (inactive) $B$ bins 
at time $t$ is given by
\begin{equation*}
B_n(t)= \sum_{d=0}^{\infty}\sum_{\theta=1}^{\infty}\sum_{ \ell= d-\theta+1}^d B_{d, \theta,\ell}(t)=\sum_{d=0}^{\infty}\sum_{\theta=1}^{\infty} \widetilde{B}_{d,\theta, d-\theta+1}(t), 
\end{equation*}
where $\widetilde{B}_{d,\theta,\ell} = \sum_{r=\ell}^d B_{d, \theta,r}(t)$ denotes the number of (inactive) $B$ bins  at time $t$ with initial degree $d$ and threshold $\theta$ with at least $\ell$ balls. In the rest of the proof we will derive a central limit theorem for the quantity $B_n(t)$, which is the number of inactive vertices at time $t$. This will immediately give us our desired result since $A_n(t)+B_n(t) =n$.

Note that $\widetilde{B}_{d,\theta,\ell}$ decreases by one only when a ball dies in an uninfected (inactive) bin with initially $d$ balls and threshold $\theta$ that has exactly $\ell$ balls, and there are precisely $\ell B_{d,\theta,\ell}$ many such balls, therefore
\begin{equation}\label{eqn:wedoct9538pm}
\, d\widetilde{B}_{d,\theta,\ell}(t) = -\ell B_{d,\theta,\ell}(t)\, dt +\, d\mathcal{M}(t),
\end{equation}
where $\mathcal{M}$ is a martingale.

Let us now define the following quantity 
\begin{equation}\label{frifeb21156pm}
\widehat{B}_{d,\theta,\ell}(t) := e^{\ell t} \widetilde{B}_{d,\theta,\ell}(t),
\end{equation}
which gives
\begin{align*}
\,d\widehat{B}_{d,\theta,\ell}(t) =\ell e^{\ell t} \widetilde{B}_{d,\theta,\ell}(t)\, dt + e^{\ell t} \,d\widetilde{B}_{d,\theta,\ell}(t).
\end{align*}

Plugging in \eqref{eqn:wedoct9538pm} we get
\begin{align*}
\,d\widehat{B}_{d,\theta,\ell}(t) &=\ell e^{\ell t} \widetilde{B}_{d,\theta,\ell}(t) \, dt -\ell e^{\ell t}B_{d,\theta,\ell}(t)\, dt +e^{\ell t} \, d\mathcal{M}(t) \\
&= \ell e^{\ell t} \widetilde{B}_{d,\theta,\ell+1}(t) \, dt +e^{\ell t} \, d\mathcal{M}(t) \\
&= \ell e^{-t}\widehat{B}_{d,\theta,\ell+1}(t)\, dt + \, d\mathcal{M'}(t),
\end{align*}
where $\, d\mathcal{M'}(t) = e^{\ell t} \,d \mathcal{M}(t)$. Since this is yet another martingale differential, we can define the following martingale for every fixed $d\geq \ell\geq 0$
\begin{equation}\label{eqn:frifeb21216pm}
M_{d,\theta,\ell}(t) := \widehat{B}_{d,\theta,\ell}(t) - \ell\int_{0}^t e^{-s}\widehat{B}_{d,\theta,\ell+1}(s) \, ds.
\end{equation}
The quadratic variation will be same  as that of $\widehat{B}_{d,\theta,\ell}(t)$, i.e.,
\begin{align}\label{eqn:satfeb22124pm}
[M_{d,\theta,\ell},M_{d,\theta,\ell}]_t &= \sum_{0<s\leq t}|\Delta M_{d,\theta,\ell}(s)|^2 = \sum_{0<s\leq t}|\Delta \widehat{B}_{d,\theta,\ell}(s)|^2 \notag \\
&=\sum_{0<s\leq t} e^{2\ell s}|\Delta \widetilde{B}_{d,\theta,\ell}(s)|^2  \notag \\
&=  \int_{0}^t e^{2\ell s} \, d(- \widetilde{B}_{d,\theta,\ell}(s)).
\end{align}

Let us now define the centered version of $M_{d,\theta,\ell}$ as follows
\begin{equation}\label{eqn:1245pm25jan21}
\widetilde{M}_{d,\theta,\ell}(t) := n^{-1/2}(M_{d,\theta,\ell}(t) -{M}_{d,\theta,\ell}(0)).
\end{equation}
This is of course the centered martingale where for $\ell \leq d$,
\begin{equation*}
M_{d,\theta,\ell}(0) = \widetilde{B}_{d,\theta,\ell}(0) =  \sum_{r=\ell}^{\infty} \sum_{i=1}^{n}  \ind\{i \in \mathcal{B}_{d,\theta,r}(0)\} = \sum_{i=1}^{n}  \ind\{i \in \mathcal{B}_{d,\theta,d}(0)\}  .
\end{equation*}
The quadratic variation of $\widetilde{M}_{d,\theta,\ell}(t)$ can be calculated using integration by parts as follows
\begin{align*}
[\widetilde{M}_{d,\theta,\ell}, \widetilde{M}_{d,\theta,\ell}]_t &= \frac{1}{n}[M_{d,\theta,\ell},M_{d,\theta,\ell}]_t \notag \\
&= \frac{1}{n}\left(\widetilde{B}_{d,\theta,\ell}(0) - e^{2\ell t}\widetilde{B}_{d,\theta,\ell}(t) + 2\ell  \int_{0}^t e^{2\ell s}\widetilde{B}_{d,\theta,\ell}(s) \, ds\right).
\end{align*}

Recall from \eqref{frifeb21156pm} that  $\widehat{B}_{d,\theta,\ell}(t) := e^{\ell t} \widetilde{B}_{d,\theta,\ell}(t),$ 
where $\widetilde{B}_{d,\theta,\ell}(t)$ is the number of (uninfected) $B$ bins with at least $\ell$ balls at time $t$ with initial number of balls (degree) $d$ and threshold $\theta$. Also, recall that balls die independently with rate $1$. Let us denote $$\cU_n(d,\theta) = \{i\in [n]: d_{n,i}=d, \theta_{n,i}=\theta\},$$ so that $u_n(d,\theta) = |\cU_n(d,\theta)|$. 
This gives also $u_{n}(d,\theta) = \sum_{\ell}B_{d,\theta,\ell}(0)$. 

Recall that $B_{d, \theta, \ell}(t) = |\cB_{d,\theta,\ell}(t)|$ where $\cB_{d,\theta,\ell}(t) = \{i\in [n]: d_i=d, \theta_i=\theta, W_i(t)=\ell\}$. Since each ball dies with rate one independent of each other and survival probability of a ball after time $t$ is equal to $e^{-t}$. A bin from $\cU_n(d,\theta)$ has at least $\ell$ balls at time $t$ with probability
\begin{equation*}
\beta(d,e^{-t},\ell) = \sum_{r=\ell}^d{d \choose r} (e^{-t})^r (1-e^{-t})^{d-r}. 
\end{equation*}

Hence we get
\begin{equation*}
\mathbb{E}[\widehat{B}_{d,\theta,\ell}(t)] = e^{\ell t}  \mathbb{E}[\widetilde{B}_{d,\theta,\ell}(t)] = e^{\ell t} \sum_{v \in \cU_{n}(d,\theta)}\mathbb{P}(\text{out of  }d \text{  at least  } \ell \text{  balls survive} ) ,
\end{equation*}
which gives
\begin{equation*}
\widehat{b}_{d,\theta,\ell}(t)=\mathbb{E}[\widehat{B}_{d,\theta,\ell}(t)] = e^{\ell t}  u_{n}(d,\theta) \beta(d,e^{-t},\ell).
\end{equation*}

Now note that (for $\theta\geq 1$)
\begin{equation*}
\widetilde{B}_{d,\theta,\ell}(0) = u_n(d,\theta) \ \  \text{   and   } \ \ \mathbb{E}\bigl[\widetilde{B}_{d,\theta,\ell}(t)\bigr] = u_n(d,\theta) \beta(d,e^{-t},\ell).
\end{equation*}

Hence, by using Condition~\ref{cond} and Glivenko-Cantelli's lemma (since each bins are independent), we get 
\begin{equation}
\label{eqn:612pm16may22}
\sup_{0\leq t\leq \tau_n'}\left|\frac{1}{n}\widetilde{B}_{d,\theta,\ell}(t)-p(d,\theta)\beta(d,e^{-t},\ell)\right|\top 0,
\end{equation}
as $n\to \infty$. Therefore, the quadratic variation of $\widetilde{M}_{d,\theta,\ell}(t)$ satisfies (for $\theta\geq 1$ and $\ell \leq d$) (see also Equation~\ref{eqn:644pm07jan21})
 \begin{align*}
   [\widetilde{M}_{d,\theta,\ell}, \widetilde{M}_{d,\theta,\ell}]_t :=& p(d,\theta)\left(1-e^{2\ell t}\beta(d,e^{-t},\ell) +  2\ell  \int_{0}^t e^{2\ell s}\beta(d,e^{-s},\ell) \, ds  \right)+o_p(1)\notag\\
    =& \Delta_{d,\theta,\ell}(t) +o_p(1).
\end{align*} 

We will also use the following crude estimate. Using \eqref{eqn:satfeb22124pm} we get that
\begin{equation}\label{eqn:aug24942pm}
[\widetilde{M}_{d,\theta,\ell}, \widetilde{M}_{d,\theta,\ell}]_t = \frac{1}{n} [M_{d,\theta,\ell},M_{d,\theta,\ell}]_t \leq \frac{1}{n} e^{2\ell t} \widetilde{B}_{d,\theta,\ell}(0)= \frac{u_n(d,\theta)}{n} e^{2\ell t}.
\end{equation}

Therefore we can apply Proposition \ref{prop:martingale} to the stopped process at $\tau_n'$
\begin{equation}\label{eqn:satfeb22135pm}
\widetilde{M}_{d,\theta,\ell}(t \wedge \tau_n') \tod Z_{d,\theta,\ell}(t \wedge t_0) \text{  in  }D[0,\infty),
\end{equation}
where $Z_{d,\theta,\ell}$ is a Gaussian process with $\mathbb{E}Z_{d,\theta,\ell}(t) =0$ and covariance $\Delta_{d,\theta,\ell}(t)$.

Also note that $\widetilde{B}_{d,\theta,\ell}(t)$ and $\widetilde{B}_{d',\theta',\ell'}(t)$ can not change together almost surely for $(d,\theta,\ell)\neq (d',\theta',\ell')$, therefore $[\widetilde{M}_{{d,\theta,\ell}}, \widetilde{M}_{{d', \theta', \ell'}}] = 0$. Thus for a finite set $S$, $\bigl(\widetilde{M}_{d,\theta,\ell}\bigr)_{(d,\theta,\ell)\in S}$ converges to $(Z_{d,\theta,\ell})_{(d,\theta,\ell)\in S}$ in distribution, and $(Z_{d,\theta,\ell})$ are independent. 

Let us now express $\widehat{B}_{d,\theta,\ell}$ in terms of ${M}_{d,\theta,\ell}$ so that we can apply the limit theorems for $\widetilde{M}_{d,\theta,\ell}$'s to get limit theorems for $\widehat{B}_{d,\theta,\ell}$.

Using \eqref{eqn:frifeb21216pm} one can write 
\begin{align}\label{eqn:satfeb22740pm}
\widehat{B}_{d,\theta,\ell}(t) = M_{d,\theta,\ell}(t)+\sum_{r=\ell+1}^{d} \ell {r-1\choose \ell}\int_{0}^t(e^{-s}-e^{-t})^{r-\ell-1}e^{-s} M_{d, \theta,r}(s) \, ds.
\end{align}
To see how to get \eqref{eqn:satfeb22740pm} from \eqref{eqn:frifeb21216pm}, note that 
\begin{equation*}
\begin{aligned}
   \widehat{B}_{d,\theta,\ell}(t) &:=  M_{d,\theta,\ell}(t) + \ell\int_{0}^t e^{-s}\widehat{B}_{d,\theta,\ell+1}(s) \, ds \\
    &= M_{d,\theta,\ell}(t) + \ell\int_{0}^t e^{-s} M_{d,\theta,\ell+1}(s) \, ds + (\ell+1)\ell\int_{0<s_1<s<t}^t e^{-s} e^{-s_1}\widehat{B}_{d,\theta,\ell+2}(s_1) \, ds_1 \, ds.
    \end{aligned}
\end{equation*}
Proceeding like this by repeatedly using \eqref{eqn:frifeb21216pm}, we can obtain \eqref{eqn:satfeb22740pm}.
Since $M_{d,\theta,\ell}$ is a martingale $\mathbb{E}[M_{d,\theta,\ell}(t)]= M_{d,\theta,\ell}(0)$, and using \eqref{eqn:satfeb22740pm} we get $ \mathbb{E}[\widehat{B}_{d,\theta,\ell}(t)]=\widehat{b}_{d,\theta,\ell}(t)$, where we define for $t\geq 0$,
\begin{equation}\label{eqnsatfeb22735pm}
\widehat{b}_{d,\theta,\ell}(t) = M_{d,\theta,\ell}(0)+\sum_{r=\ell+1}^{d}\ell{r-1\choose \ell }\int_{0}^t(e^{-s}-e^{-t})^{r-\ell-1}e^{-s} M_{d,\theta, \ell}(0) \, ds.
\end{equation}
Using \eqref{eqn:satfeb22740pm}, \eqref{eqnsatfeb22735pm} and \eqref{eqn:1245pm25jan21}, it turns out that the centered version of $\widehat{B}_{d,\theta, \ell}(t)$ satisfies 
\begin{align*}
&\frac{1}{\sqrt{n}}\left(\widehat{B}_{d,\theta,\ell}(t) - \widehat{b}_{d,\theta,\ell}(t)\right) \notag \\
&= \frac{1}{\sqrt{n}} \left({M}_{d,\theta,\ell}(t)-{M}_{d,\theta,\ell}(0)+\sum_{r=\ell+1}^{\infty}\ell{r-1\choose \ell }\int_{0}^t\bigl(e^{-s}-e^{-t}\bigr)^{r-\ell-1}e^{-s} \bigl({M}_{d,\theta,\ell}(s)-M_{d,\theta,\ell}(0)\bigr) \, ds \right)\notag\\
 &   = \widetilde{M}_{d,\theta,\ell}(t)
    +\sum_{r=\ell+1}^{\infty}\ell{r-1\choose \ell}\int_{0}^t(e^{-s}-e^{-t})^{r-\ell-1}e^{-s} \widetilde{M}_{d, \theta, \ell}(s) \, ds. 
\end{align*}
Now using \eqref{eqn:aug24942pm}  we get (for $\ell \leq d$ and for any $T\geq 0$):
\begin{equation*}
   \mathbb{E} \left[\widetilde{M}_{d,\theta,\ell},\widetilde{M}_{d,\theta,\ell}\right]_T \leq e^{2\ell T} \frac{u_n(d,\theta)}{n} \leq \frac{1}{n} e^{2\ell T}\sum_{d\geq \ell} \sum_{\theta} u_n(d,\theta).
\end{equation*}
Then using Condition~\ref{cond} and the simple fact that for $A>1$,
$$A^\ell\sum_{d\geq \ell} \sum_{\theta} u_n(d,\theta) \leq \sum_{d\geq \ell} \sum_{\theta} u_n(d,\theta) A^d \leq  \sum_{d} \sum_{\theta} u_n(d,\theta)A^d,$$ we get for any $T\geq 0$, by setting $A= e^{2T+2}$, there is a constant $C_A$ such that
\begin{equation*}
   \mathbb{E} \left[\widetilde{M}_{d,\theta,\ell},\widetilde{M}_{d,\theta,\ell}\right]_T \leq e^{2\ell T}  C_A A^{-\ell}  = C_A e^{-2\ell}.
\end{equation*}
 Now using Doob's $L^2-$inequality we get
 $$
 \mathbb{E} \left( \sup_{0\leq t\leq T} \widetilde{M}_{d,\theta,\ell}^2(t)\right) \leq 4 \mathbb{E}\left[\widetilde{M}_{d,\theta,\ell},\widetilde{M}_{d,\theta,\ell}\right]_T \leq C' e^{-2\ell}.
 $$
Therefore using Cauchy-Schwarz inequality,
\begin{equation}\label{eqn:aug26540pm}
   \mathbb{E} \left( \sup_{0\leq t\leq T}| \widetilde{M}_{d,\theta,\ell}(t)|\right)  \leq C'e^{-\ell}.
\end{equation}
Therefore by \eqref{eqn:satfeb22135pm} and Fatou's lemma we get 
\begin{equation}\label{eqn:220pm25jan21}
    \mathbb{E}\left(\sup_{0\leq t\leq t_0}\left|Z_{d,\theta,\ell}(t)\right|\right) \leq C'e^{-\ell}.
\end{equation}

 Let us define 
 \begin{equation*}
 R_{d,\theta,\ell}(t):= \sum_{r=\ell+1}^{\infty}\ell{r-1\choose \ell}\int_{0}^t(e^{-s}-e^{-t})^{r-\ell-1}e^{-s} \widetilde{M}_{d,\theta,r}(s) \, ds.
 \end{equation*}
 Then we have (since $(e^{-s}-e^{-t})^{r-\ell-1}\leq (1-e^{-t})^{r-\ell-1}$ for $s\in[0,t]$)
 \begin{equation*}
\mathbb{E}\left[ \sup_{0\leq t\leq T}\left|R_{d,\theta,\ell}(t)\right|\right] \leq \sum_{r=\ell+1}^{\infty}\ell{r-1\choose \ell}\int_{0}^T(1-e^{-t})^{r-\ell-1}e^{-s}\mathbb{E} \left[\sup_{0\leq t\leq T} \left|\widetilde{M}_{d,\theta,r}(s)\right|\right] \, ds.
    \end{equation*}

Therefore \eqref{eqn:aug26540pm} yields 
\begin{equation}\label{eqn:aug26603pm}
\mathbb{E}\left( \sup_{0\leq t\leq T}\left|R_{d,\theta,\ell}(t)\right|\right) \leq C'  \sum_{r=\ell+1}^{\infty}\ell e^{-r} {r-1\choose \ell} (1-e^{-T})^{r-\ell}.
\end{equation}

Clearly, for fixed $T$, \eqref{eqn:aug26603pm} converges to zero uniformly in $n$, as $\ell \rightarrow \infty$. Now using \eqref{eqn:satfeb22740pm}, \eqref{eqnsatfeb22735pm}, \eqref{eqn:aug26603pm} and applying \cite[Theorem 4.2]{billingsley2013convergence} we get
\begin{equation*}
     \frac{1}{\sqrt{n}}\left(\widehat{B}_{d,\theta,\ell}(t \wedge \tau_n') - \widehat{b}_{d,\theta,\ell}(t \wedge \tau_n')\right) \tod \widetilde{Z}_{d,\theta,\ell}(t \wedge t_0),
    \end{equation*}
    in $\cD[0,\infty)$ for each $\ell$, where   
    \begin{equation}\label{eqn:221pm25jan21}
        \widetilde{Z}_{d,\theta,\ell}(t) := Z_{d,\theta,\ell}(t)
    +\sum_{r=\ell+1}^{\infty}\ell{r-1\choose \ell}\int_{0}^t(e^{-s}-e^{-t})^{r-\ell-1}e^{-s} Z_{d,\theta,r}(s) \, ds. 
    \end{equation}

 Again using \eqref{eqn:220pm25jan21} we obtain that the sum in \eqref{eqn:221pm25jan21} almost surely uniformly converges for $t\leq t_0$, and this gives $\widetilde{Z}_{d,\theta,\ell}$ is continuous for each $(d,\theta,\ell)$. Using \eqref{frifeb21156pm}, we can write 
 $$
 \frac{e^{-\ell t}}{\sqrt{n}}\left(\widehat{B}_{d,\theta,\ell}(t ) - \widehat{b}_{d,\theta,\ell}(t )\right) :=  \frac{1}{\sqrt{n}}\left(\widetilde{B}_{d,\theta,\ell}(t) - e^{-\ell t}\widehat{b}_{d,\theta,\ell}(t )\right) .
 $$

 Now recall that we are interested in the case $\ell=d-\theta+1$. More particularly, we will derive a central limit theorem of the following quantity 
    
    \begin{align}\label{eqn:106pm08feb21}
    &\frac{1}{\sqrt{n}} \sum_{d=1}^{\infty}\sum_{\theta=1}^{d}\left(\widetilde{B}_{d,\theta,d-\theta+1}(t) - e^{-(d-\theta+1) t}\widehat{b}_{d,\theta,d-\theta+1}(t )\right)  \notag \\
    &= \frac{1}{\sqrt{n}} \sum_{d=1}^{\infty}\sum_{\theta=1}^{d}e^{-(d-\theta+1) t}\left(\widehat{B}_{d,\theta, d-\theta+1}(t ) - \widehat{b}_{d,\theta,d-\theta+1}(t )\right)\notag \\
    &=  \sum_{d=1}^{\infty}\sum_{\theta=1}^{d} e^{-(d-\theta+1) t}\widetilde{M}_{d, \theta, d-\theta+1}(t) \notag\\
    &+ \sum_{d=1}^{\infty}\sum_{\theta=1}^{d}e^{-(d-\theta+1) t}\sum_{r=d-\theta+2}^{\infty}(d-\theta+1){r-1\choose d-\theta+1 }\int_{0}^t(e^{-s}-e^{-t})^{r-(d-\theta+1)-1}e^{-s} \widetilde{M}_{d,\theta,r}(s) \, ds. 
    \end{align}
Note that from \eqref{eqn:satfeb22135pm} we know the limiting distribution of $\widetilde{M}_{d, \theta, d-\theta}(t)$, and therefore, it will be sufficient to show that the contribution from the tail of \eqref{eqn:106pm08feb21} is negligible. To be precise, let us define two terms (we ignore the contribution from $e^{-(d-\theta) t}$, which will always be bounded above by $1$)
 $$\widetilde{R}_{\ell}(t):= \sum_{d =\ell}^{\infty}\sum_{\theta=1}^{d} \widetilde{M}_{d, \theta, d-\theta+1}(t),$$ and  
 
 $$\widehat{R}_{\ell}(t):= \sum_{d= \ell}^{\infty}\sum_{\theta=1}^{d}\sum_{r=d-\theta+2}^{d}(d-\theta+1){r-1\choose d-\theta+1 }\int_{0}^t(e^{-s}-e^{-t})^{r-(d-\theta+1)-1}e^{-s} \widetilde{M}_{d, \theta, r}(s) \, ds.$$
In the following lemma we show that $\mathbb{E} \left[ \sup_{0\leq t\leq T} \left| \widetilde{R}_{\ell}(t)\right|\right]$, and $ \mathbb{E} \left[ \sup_{0\leq t\leq T} \left| \widehat{R}_{\ell}(t)\right|\right]$ converges to zero as $\ell\rightarrow \infty$ uniformly in $n$.

\begin{lemma}\label{lem:418pm27feb21}
We have 
$\mathbb{E} \left[ \sup_{0\leq t\leq T} \left| \widetilde{R}_{\ell}(t)\right|\right]\to 0$ and $ \mathbb{E} \left[ \sup_{0\leq t\leq T} \left| \widehat{R}_{\ell}(t)\right|\right] \to 0$, as $\ell\rightarrow \infty$, uniformly in $n$.
\end{lemma}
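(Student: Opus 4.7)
The plan is to handle the two assertions separately, exploiting the fact that $\widetilde{R}_\ell$ is itself a martingale while $\widehat{R}_\ell$ is only an integral of martingales.

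For $\widetilde{R}_\ell(t)$, I would first observe that the component martingales $\widetilde{M}_{d,\theta,d-\theta}$ for distinct pairs $(d,\theta)$ are pairwise orthogonal: each jumps only when a ball dies in a bin with that specific initial degree and threshold, and those populations of bins are disjoint. Hence $\widetilde{R}_\ell$ is itself a martingale and, using the per-term bound \eqref{eqn:aug24942pm},
\begin{equation*}
[\widetilde{R}_\ell,\widetilde{R}_\ell]_T \;=\; \sum_{d\geq \ell}\sum_{\theta=1}^d [\widetilde{M}_{d,\theta,d-\theta},\widetilde{M}_{d,\theta,d-\theta}]_T \;\leq\; \frac{1}{n}\sum_{d\geq \ell}\sum_{\theta=1}^d e^{2(d-\theta)T}\,u_n(d,\theta).
\end{equation*}
Bounding $e^{2(d-\theta)T}\leq e^{2dT}$ and applying Condition~\ref{cond}~$(C_2)$ with $A=e^{2T+2}$, one writes $e^{2dT}=A^d e^{-2d}$ and extracts $e^{-2d}\leq e^{-2\ell}$ for $d\geq\ell$, obtaining $[\widetilde{R}_\ell,\widetilde{R}_\ell]_T=O(e^{-2\ell})$ uniformly in $n$. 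Doob's $L^2$ maximal inequality followed by Cauchy--Schwarz then gives $\mathbb{E}\sup_{t\leq T}|\widetilde{R}_\ell(t)|=O(e^{-\ell})\to 0$.

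For $\widehat{R}_\ell(t)$ orthogonality is unavailable, since the martingales $\widetilde{M}_{d,\theta,r}$ with the same $(d,\theta)$ but different $r$ can jump simultaneously (whenever a type-$B$ bin becomes type-$A$, several indicators change at once). Instead, I would use monotonicity of the kernel: the map $t\mapsto (e^{-s}-e^{-t})^{r-(d-\theta)-1}e^{-s}$ is nondecreasing in $t$ for $s\leq t$, so the sup can be pulled inside the integral, and together with the closed form $\int_0^T(e^{-s}-e^{-T})^{k-1}e^{-s}\,ds=(1-e^{-T})^k/k$,
\begin{equation*}
\sup_{t\leq T}|\widehat{R}_\ell(t)| \;\leq\; \sum_{d\geq \ell}\sum_{\theta=1}^d\sum_{r=d-\theta+1}^d (d-\theta)\binom{r-1}{d-\theta}\,\frac{(1-e^{-T})^{r-(d-\theta)}}{r-(d-\theta)}\,\sup_{s\leq T}|\widetilde{M}_{d,\theta,r}(s)|.
\end{equation*}
The Doob--Cauchy--Schwarz bound $\mathbb{E}\sup_{s\leq T}|\widetilde{M}_{d,\theta,r}(s)|\leq 2\,e^{rT}\sqrt{u_n(d,\theta)/n}$ coming from \eqref{eqn:aug24942pm} reduces the problem to a deterministic sum. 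Bounding $\binom{r-1}{d-\theta}\leq 2^{d-1}$ and summing in $k=r-(d-\theta)$ yields a per-$(d,\theta)$ factor of order $d^2(2e^T)^d(1-e^{-T})^\theta\sqrt{u_n(d,\theta)/n}$; Cauchy--Schwarz in $\theta$ via $\sum_{\theta=1}^d\sqrt{u_n(d,\theta)/n}\leq\sqrt{d+1}\,\sqrt{u_n(d,\cdot)/n}$ and Condition~\ref{cond}~$(C_2)$ applied with $A$ large enough that $(2e^T)/\sqrt{A}<1/2$ (e.g.\ $A=16e^{2T+2}$) give $\sqrt{u_n(d,\cdot)/n}\leq\sqrt{C_A}\,A^{-d/2}$ and a geometric tail in $d$. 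The sum over $d\geq\ell$ then tends to zero uniformly in $n$.

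The main obstacle is the $\widehat{R}_\ell$ estimate: the absence of orthogonality forces us to control each $\widetilde{M}_{d,\theta,r}$ individually and to manage the combinatorial factor $\binom{r-1}{d-\theta}$, which together with $e^{rT}$ produces exponential growth in $d$. This growth is offset only because $(C_2)$ holds for every $A>1$, allowing $A$ to be chosen large enough to dominate these factors. Uniformity in $n$ is automatic, as every estimate depends on $n$ only through $u_n(d,\theta)/n$, which is controlled by $(C_2)$ independently of $n$.
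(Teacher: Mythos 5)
Your proof is essentially sound and for $\widehat{R}_\ell$ it follows the same line as the paper (triangle inequality, the Doob--Cauchy--Schwarz bound $\mathbb{E}\sup_{s\leq T}|\widetilde{M}_{d,\theta,r}(s)|\leq 2e^{rT}\sqrt{u_n(d,\theta)/n}$, the closed form of the kernel integral, and then killing the resulting $e^{cdT}$ growth by picking $A$ large in Condition~\ref{cond}$(C_2)$; the paper uses the exact negative-binomial identity \eqref{eqn:314pm08feb21} where you use the crude $\binom{r-1}{d-\theta}\leq 2^{d-1}$, but both land on a geometric tail in $d$). For $\widetilde{R}_\ell$, however, you take a genuinely different and cleaner route: you note that the $\widetilde{M}_{d,\theta,d-\theta}$ for distinct $(d,\theta)$ are orthogonal, so $\widetilde{R}_\ell$ is itself a martingale, apply Doob once to $\widetilde{R}_\ell$ and Cauchy--Schwarz once, which saves a Cauchy--Schwarz loss compared with the paper's term-by-term triangle-inequality bound $\sum_{d,\theta}\mathbb{E}\sup|\widetilde{M}_{d,\theta,d-\theta}|$. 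One inaccuracy worth flagging: your claim that ``orthogonality is unavailable'' for $\widehat{R}_\ell$ because the $\widetilde{M}_{d,\theta,r}$ with the same $(d,\theta)$ but different $r$ can jump simultaneously is not correct. The paper establishes $[\widetilde{M}_{d,\theta,\ell},\widetilde{M}_{d',\theta',\ell'}]=0$ for \emph{all} distinct triples: in the continuous-time construction a single ball death moves one bin from ``exactly $\ell$'' to ``exactly $\ell-1$'' white balls, changing only $\widetilde{B}_{d,\theta,\ell}$, and the simultaneous recoloring of an $A$-ball only affects a bin with fewer than $d'-\theta'$ white balls, so no two of the $\widetilde{B}_{d,\theta,\ell}$ with $\ell\geq d-\theta$ jump together. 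The real obstruction to repeating the quadratic-variation trick for $\widehat{R}_\ell$ is not a failure of orthogonality but that $\widehat{R}_\ell(t)$, because of the anticipating kernel $(e^{-s}-e^{-t})^{r-(d-\theta)-1}$, is not a martingale in $t$, so Doob does not apply to it directly; your fallback (bounding the kernel integral and controlling each $\widetilde{M}_{d,\theta,r}$ separately) is the right move and is what the paper does. Finally, your stated per-$(d,\theta)$ factor ``$d^2(2e^T)^d(1-e^{-T})^\theta$'' is not quite what the crude bounds yield, but the discrepancy is immaterial: the decisive point, which you do make, is that $(C_2)$ holds for every $A>1$, so $A$ can be chosen to dominate any fixed exponential in $d$.
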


\begin{proof}
From Condition~\ref{cond} and the fact that for all $A>1$,
$A^d u_n(d,\theta) \leq \sum_{d \geq \ell}u_n(d,\theta) A^d$ we find that for any $A>1$ there is a constant $C_A$ such that 
\begin{equation*}
   \mathbb{E} \left[\widetilde{M}_{d,\theta,\ell},\widetilde{M}_{d,\theta,\ell}\right]_T \leq e^{2\ell T} \frac{u_n(d,\theta)}{n}.
\end{equation*}

Using Doob's $L^2-$inequality we get
 \begin{equation}\label{eqn:126pm08feb21}
 \mathbb{E} \left[ \sup_{0\leq t\leq T} \widetilde{M}_{d,\theta,\ell}^2(t)\right] \leq 4 \mathbb{E}\left[\widetilde{M}_{d,\theta,\ell},\widetilde{M}_{d,\theta,\ell}\right]_T \leq 4e^{2rT} \frac{u_n(d,\theta)}{n}.
  \end{equation}
    Therefore,
    \begin{align*}
        \mathbb{E} \left[ \sup_{0\leq t\leq T} \left| \widetilde{R}_{\ell}(t)\right|\right] \leq \sum_{d= \ell}^\infty\sum_{\theta=1}^{d} \mathbb{E} \left( \sup_{0\leq t\leq T} \left|\widetilde{M}_{d,\theta,d-\theta}(t)\right|\right).
    \end{align*}
    Using Cauchy-Schwarz inequality, and \eqref{eqn:126pm08feb21}
     \begin{align*}
        \mathbb{E} \left[ \sup_{0\leq t\leq T} \left| \widetilde{R}_{N,n}(t)\right|\right] \leq 2\sum_{d=\ell}^{\infty}\sum_{\theta=1}^{d} e^{(d-\theta)T} \sqrt{\frac{u_n(d,\theta)}{n}} \leq \frac{2}{\sqrt{n}} \sum_{d\geq \ell}e^{dT}\sum_{\theta=1}^{d}  \sqrt{u_n(d,\theta)}. 
    \end{align*}

         Using Condition~\ref{cond}, for all $A>1$,
$A^d u_n(d,\theta) =O(n) $. Therefore for any $A>1$ there is a constant $C_A$ such that
\begin{equation*}
    \mathbb{E} \left[ \sup_{0\leq t\leq T} \left| \widetilde{R}_{\ell}(t)\right|\right] \leq C_A \sum_{d\geq \ell} de^{dT}A^{-d/2}.
\end{equation*}
In particular, choosing $A=e^{4T}$, we see that $\mathbb{E} \left[ \sup_{0\leq t\leq T} \left| \widetilde{R}_{\ell}(t)\right|\right]$ converges to zero as $\ell\rightarrow \infty$ uniformly in $n$.

    Now let us define 
    $$
    \widehat{R}_{\ell}(t):= \sum_{d= \ell}^{\infty}\sum_{\theta=1}^{d}\sum_{r=d-\theta+1}^{d}(d-\theta){r-1\choose d-\theta }\int_{0}^t(e^{-s}-e^{-t})^{r-(d-\theta)-1}e^{-s} \widetilde{M}_{d, \theta, r}(s) \, ds.
    $$ 
    Observe that the index $r$ is at most $d$, and therefore using Cauchy-Schwarz inequality, and \eqref{eqn:126pm08feb21}
\begin{equation}\label{eqn:424pm08feb21}
\mathbb{E}\left[ \sup_{0\leq t\leq T}\left|\widetilde{M}_{d, \theta, r}(s)\right|\right]\leq 2e^{sT}\sqrt{\frac{u_n(d,\theta)}{n}}. 
\end{equation}

We can now compute the following tail bound
\begin{equation*}
 \mathbb{E} \left( \sup_{0\leq t\leq T} \left| \widehat{R}_{\ell}(t)\right|\right) \leq \sum_{d= \ell}^{\infty}\sum_{\theta=1}^{d}\sum_{r=d-\theta+1}^{d}(d-\theta){r-1\choose d-\theta }(1-e^{-t})^{r-(d-\theta)} 2e^{sT}\sqrt{\frac{u_n(d,\theta)}{n}}
\end{equation*}
Again we have $u_n(d,\theta)\leq C_A A^{-d}n$ for some constant $C_A>0$. Plugging in $A=e^{8T}$ we get
\begin{equation}\label{eqn:315pm08feb21}
 \mathbb{E} \left[ \sup_{0\leq t\leq T} \left| \widehat{R}_{\ell}(t)\right|\right] \leq C_A\sum_{d=\ell}^{\infty}e^{-3dT}\sum_{\theta=1}^{d}\sum_{r=d-\theta+1}^{\infty}(d-\theta){r-1\choose d-\theta }(1-e^{-T})^{r-(d-\theta)}.
\end{equation}
We can write 
\begin{eqnarray}\label{eqn:314pm08feb21}
    \sum_{r=d-\theta+1}^{\infty}(d-\theta){r-1\choose d-\theta }(1-e^{-t})^{r-(d-\theta)} &=& \sum_{r=0}^{\infty} r {r +d-\theta-1\choose r}(1-e^{-T})^{r} \notag\\
    &=& (d-\theta)e^{(d-\theta+1)T}(1-e^{-T}),
\end{eqnarray}
where in the second equality we used the expectation of negative binomial distribution. Now plugging this in \eqref{eqn:315pm08feb21} we have
\begin{equation*}\label{eqn:315pm08feb21ee}
 \mathbb{E} \left[ \sup_{0\leq t\leq T} \left| \widehat{R}_{\ell}(t)\right|\right] \leq C_A\sum_{d\geq \ell}d^2 e^{-2dT},
\end{equation*}
which again goes to zero uniformly in $n$ as $\ell\rightarrow \infty$. This completes the proof of Lemma~\ref{lem:418pm27feb21}.
\end{proof}


By using Lemma~\ref{lem:418pm27feb21}, \eqref{eqn:106pm08feb21} and \cite[Theorem 4.2]{billingsley2013convergence} we get
\begin{equation}\label{eqn:240pm16may22}
\begin{aligned}
        \frac{1}{\sqrt{n}} \sum_{d=1}^{\infty}\sum_{\theta=1}^{d}&\left(\widetilde{B}_{d,\theta,d-\theta+1}(t\wedge \tau_n') - e^{-(d-\theta+1) (t \wedge \tau_n')}\widehat{b}_{d,\theta,d-\theta+1}(t\wedge \tau_n' )\right) \\
        &\tod   \sum_{d=1}^{\infty}\sum_{\theta=1}^{d}e^{-(d-\theta+1) (t \wedge \tau_n')}\widetilde{Z}_{d,\theta,d-\theta+1}(t \wedge t_0),
\end{aligned}    
\end{equation}
as $n\to \infty$, that is 
\begin{equation}
\label{eqn:903pm16may22}
\begin{aligned}
    \frac{1}{\sqrt{n}}&\left(B_n(t\wedge \tau_n')- \sum_{d=1}^{\infty}\sum_{\theta=1}^{d} u_n(d,\theta) \beta(d,e^{-t},d-\theta+1)\right) \\
    &\tod 
    \sum_{d=1}^{\infty}\sum_{\theta=1}^{d}e^{-(d-\theta+1) (t \wedge \tau_n')}\widetilde{Z}_{d,\theta,d-\theta+1}(t \wedge t_0)=:Z_A(t\wedge t_0).
\end{aligned}    
\end{equation}
We are now done except for showing that $Z_A(t)$ is continuous. Using \eqref{eqn:424pm08feb21} we get
\begin{equation*}
  \mathbb{E}\left[\sup_{0\leq t\leq T} \frac{1}{\sqrt{n}} \left|\widehat{B}_{d,\theta,\ell}(t) - \widehat{b}_{d,\theta,\ell}(t)\right|\right] \leq 2e^{sT}\sqrt{\frac{u_n(d,\theta)}{n}}
    +2\sum_{r=\ell+1}^{\infty}\ell {r-1\choose \ell}(1-e^{-t})^{r-\ell} e^{sT}\sqrt{\frac{u_n(d,\theta)}{n}}. 
\end{equation*}

Again using $u_n(d,\theta)\leq C_A A^{-d}n$ and writing the second term in terms of negative binomial distribution we get
\begin{equation*}
     \mathbb{E}\left(\sup_{0\leq t\leq T} \frac{1}{\sqrt{n}}  \left|\widehat{B}_{d,\theta,\ell}(t) - \widehat{b}_{d,\theta,\ell}(t)\right|\right) \leq C_A e^{dT}A^{-d/2}+ C_A e^{dT}A^{-d/2}e^{\ell T+T}(1-e^{-T})\ell.
\end{equation*}
Since $\ell\leq d$, by choosing $A=e^{4T+2}$, we get 
\begin{equation}\label{eqn:457pm08feb21}
    \mathbb{E}\left[\sup_{0\leq t\leq T} \frac{1}{\sqrt{n}} \left|\widehat{B}_{d,\theta,\ell}(t) - \widehat{b}_{d,\theta,\ell}(t)\right|\right] \leq C_A de^{-d}.
\end{equation}
Using \eqref{eqn:457pm08feb21} and Fatou's lemma we get
\begin{equation*}
    \mathbb{E}\left[\sup_{0\leq t\leq t_0}\left|\widetilde{Z}_{d,\theta,d-\theta}(t)\right|\right] \leq C_A de^{-d}.
\end{equation*}    
This in turn implies that $\sum_{d=1}^{\infty}\sum_{\theta=1}^{d} e^{-(d-\theta) t}\widetilde{Z}_{d,\theta,d-\theta}(t)$, almost surely converges uniformly in $t\leq t_0$ and therefore $\sum_{d=1}^{\infty}\sum_{\theta=1}^{d}\widetilde{Z}_{d,\theta,d-\theta}(t)$ is continuous almost surely. 
Finally, we finish the proof by proving the tail bound in \eqref{eqn:106pm08feb21}. 


This completes the proof of Theorem~\ref{thm_n-main1}.
\subsection{The stopping time}\label{sec:stop}
Note that we can write
\begin{equation*}
    H_B(t) =  \sum_{d=1}^{\infty}\sum_{\theta=1}^{d}\sum_{\ell=d-\theta +1}^d \ell B_{d,\theta,\ell}(t)=  \sum_{d=1}^{\infty}\sum_{\theta=1}^{d}\sum_{\ell= d-\theta+1}^{d} \ell \sum_{v \in [n]} \ind\{v \in \cB_{d,\theta,\ell}(t)\}. 
\end{equation*}
It is precisely the number of balls that were not initially infected (active), and remain non-infected at time $t$. First let us consider the term $\sum_{\ell= d-\theta+1}^{d} \ell \sum_{v \in [n]} \ind\{v \in \cB_{d,\theta,\ell}(t)\}$.
Consider those $u_n(d,\theta)$ bins with degree $d$ and threshold $\theta \geq 1$ (not initially infected). For $k=1,2\ldots,u_n(d,\theta)$, let $T_k$ be the time $\ell$-th ball is removed from the $k$-the such bin. Then 
\begin{equation*}
    \left|\{k:T_k \leq t\}\right| = \sum_{r=0}^{d-\ell} \sum_{v \in [n]}\ind\{v \in \cB_{d,\theta,r}(t)\}.
\end{equation*} 
For the $k$-th bin we get
\begin{equation*}
    \mathbb{P}\left(T_k \leq t\right) = \sum_{r=0}^{d-\ell}{d \choose r} \left(e^{-t}\right)^{r}\left(1-e^{-t}\right)^{d-r} :=\sum_{r=0}^{d-\ell}b(d,e^{-t},r),
\end{equation*} 
 out of the $u_n(d,\theta)$ bins. Since all bins are independent of each other, using Glivenko-Cantelli lemma we get that
\begin{equation*}
 \sup_{t\geq 0} \left| \frac{1}{n} \sum_{r=0}^{d-\ell}\sum_{v \in [n]} \ind\{v \in \cB_{d,\theta,r}(t)\} -\frac{u_n(d,\theta)}{n}\sum_{r=0}^{d-\ell}b(d,e^{-t},r) \right|\longrightarrow 0,
\end{equation*}
in probability as $n \rightarrow \infty$. 
By Condition~\ref{cond}, $u_n(d,\theta) = p(d,\theta)n+o(n)$, therefore we get 
\begin{equation}\label{eqn:644pm07jan21}
 \sup_{t\geq 0} \left| \frac{1}{n} \sum_{r=0}^{d-\ell}\sum_{v \in [n]}\ind\{v \in \cB_{d,\theta,r}(t)\} - p(d,\theta)\sum_{r=0}^{d-\ell}b(d,e^{-t},r) \right|\longrightarrow 0,
\end{equation}
in probability as $n \rightarrow \infty$. Since \eqref{eqn:644pm07jan21} is true for all $0\leq \ell \leq d$, taking difference of two consecutive terms we get for each $0\leq r\leq d$,
\begin{equation*}
     \sup_{t\geq 0} \left| \frac{1}{n} \sum_{v \in [n]}\ind\{v \in \cB_{d,\theta,r}(t)\} -p(d,\theta)b(d,e^{-t},r) \right|\longrightarrow 0,
\end{equation*}
and therefore taking a sum over $r$, and using  Condition~\ref{cond},
\begin{equation*}
     \sup_{t\geq 0} \left| \frac{1}{n} \widetilde{B}_{d,\theta,\ell}(t) -p(d,\theta)\beta(d,e^{-t},\ell) \right|\top  0.
\end{equation*}

Also combining Condition~\ref{cond} and \eqref{eqn:644pm07jan21}  we get
\begin{equation}\label{eqn:712pm07jan21}
   \sup_{t \geq 0} \left| \frac{H_B(t)}{n}  -  \sum_{d=1}^{\infty}\sum_{\theta=1}^{d}\sum_{\ell= d-\theta+1}^{d} \ell p(d,\theta)b(d,e^{-t},\ell)\right| \top 0,
\end{equation}
in probability as $n \rightarrow \infty$.
Recall that the stopping time $\tau$ was defined as $H_A(\tau) = -1$. Also note that from \eqref{eqn:712pm07jan21} and \eqref{eqn:oct151728} we get that
\begin{equation}\label{eqn:oct151738}
  \sup_{t \geq 0}\left|  \frac{H_A(t)}{n} - \lambda e^{-2t} + \sum_{d=1}^{\infty}\sum_{\theta=1}^{d}\sum_{\ell= d-\theta+1}^{d} \ell  p(d,\theta)b(d,e^{-t},\ell)\right| \top 0,
\end{equation}
in probability as $n \rightarrow \infty$.
We can then  write 
$h_B(z) =\sum_{d=1}^{\infty}\sum_{\theta=1}^{d}\sum_{\ell= d-\theta+1}^{d} \ell p(d,\theta)b(d,z,\ell)$ and $h_A(z)=\lambda  z^2 - h_B(z)$, as the limit of $\frac{H_B(t)}{n}$ and $\frac{H_A(t)}{n}$, by setting $z=e^{-t}$. Also \eqref{eqn:oct151738}, gives us the plausible candidate for the limit of our stopping time
$
\widehat{z} :=\sup\{z\in [0,1]: h_A(z)=0 \}.
$
We further assume that if $\widehat{z}$ is a non-zero then it is not a local minimum of $h_A(z)$.

\subsection{Proof of Lemma~\ref{lem:stop}}\label{sec:proofLem}
To show this let us take a constant $t_1>0$ such that $t_1 < -\ln \widehat{z}$. This means $\widehat{z}<1$, and hence $\lambda  > h_B(1)$. Therefore using the fact that $\lambda  z^2 - h_B(z)$ is continuous we get $h_B(z) < \lambda z^2$ for $z \in (\widehat{z},1]$. This again gives $h_B(e^{-t}) < \lambda e^{-2t}$ for all $t \leq t_1$. Since $[0,t_1]$ is compact again using the continuity of $h_B(z)$ we get $h_B(e^{-t}) - \lambda e^{-2t}<-c$ for some $c>0$. Therefore on the set $\tau< t_1$ we will have $h_B(e^{-\tau}) - \lambda e^{-2\tau}<-c$.  On the other hand, since $H_A(\tau) =-1$, as $n\rightarrow \infty$ in \eqref{eqn:oct151738} ${H_A(\tau)}/{n} \rightarrow 0$, which gives a contradiction, and thus
$\mathbb{P}(\tau \leq t_1) \rightarrow 0$ as $n \rightarrow \infty$. Now let us choose $t_2 < \tau$ where $t_2 \in ( -\ln{\widehat{z}}, -\ln{(\widehat{z} -\vep)})$. By our assumption $\widehat{z}$ is not a local minimum of $h_A(z)=\lambda  z^2 - h_B(z)$, therefore there is an $\vep>0$ such that $\lambda  e^{-2t_2} - h_B(e^{-t_2})=-c$ for some $c>0$. Now by definition if $\tau>t_2$, then $H_B(t_2)\geq 0$. Plugging these in \eqref{eqn:oct151738} we get $\frac{H_B(t_2)}{n} - \lambda e^{-2t_2} + h_B(t_2) \geq c$. This gives $\mathbb{P}(\tau \geq t_2) \rightarrow 0$ as $n \rightarrow \infty$. Since $t_1$ and $t_2$ can be arbitrary close to $-\ln \widehat{z}$, the proof of the claim is complete.

\subsection{Proof of Theorem~\ref{prop:645pm17may22}}\label{proof:prop:645pm17may22}
We first derive a joint functional central limit theorem for the processes $(A_n(t),H_A(t))$, from which we prove the theorem.

\begin{proposition}
\label{prop:446pm17may22}
Let $\tau_n' \leq \tau_n$ be a stopping time such that $\tau_n' \top t_0$ for some $t_0 \geq 0$. Then in $\cD[0,\infty)$,  as $n\to \infty$,
\begin{equation}
\label{eqn:444pm17may}
  n^{-1/2}  \left(A_n(t\wedge \tau_n')- n\widehat{a}_n(t),H_A(t\wedge \tau_n')- n h_A^n(e^{-
  t})\right) \tod (Z_A(t\wedge t_0), Z_{HA}(t\wedge t_0) ), 
\end{equation}
with $Z_A$ as in Theorem~\ref{thm_n-main1}, and $Z_{HA}(t)$ is a Gaussian process that is described in the proof of this proposition. (The covariance could be calculated from \eqref{eqn:903pm16may22} and \eqref{eqn:911pm16may22}.)
\end{proposition}

For the sake of readability, we postpone the proof of the proposition to the end of this section. We continue with the proof of Theorem~\ref{prop:645pm17may22}. Part (i) follows form~\cite{amini10}. Consider now the case when $\widehat{z}\neq0$ and $\widehat{z}$ is a stable solution, i.e. $\alpha:=h'_A(\widehat{z})>0$. Using similar arguments as in the proof of \cite[Lemma 2.3]{janson2008asymptotic}, one can show that  ${\widehat{a}_n}$ converges to ${\widehat{a}}$ and ${h}_A^n$ converges to ${h}_A$, together with their derivatives, i.e., ${\widehat{a}_n}'$ converges to ${\widehat{a}}'$ and ${h_A^n}'$ converges to $h_A'$, uniformly on $[0,1]$.

Hence, since for small enough $\delta>0$, $h_A(\widehat{z}+\delta)>0$, and $h_A(\widehat{z}-\delta)<0$,  $h_A^n$ has a zero at $\widehat{z}_n$ in $(\widehat{z}-\delta, \widehat{z}+\delta)$ for sufficiently large $n$. Further, since $h_A^n \to h_A$ uniformly, we have $h_A^n>0$ in the interval $[\widehat{z}+\delta,1]$. Since $\delta>0$ is arbitrary, we have $\widehat{z}_n \to \widehat{z}$ as $n\to \infty$. Let us write $\widehat{\tau}_n= -\ln{\widehat{z}_n}$, and $\widehat{\tau}= -\ln{\widehat{z}}$ , therefore $\widehat{\tau}_n\to \widehat{\tau}$ as $n\to\infty$. \par

We now use the Skorohod coupling theorem so that we can consider all the random variables in \eqref{eqn:444pm17may} to be defined on the same probability space and the limit holds almost surely. Now we apply Proposition \ref{prop:446pm17may22} with $\tau_n$, and Lemma \ref{lem:stop} gives us that $\tau_n\to \widehat{\tau}=-\ln{\widehat{z}}$. Also, since the limits are continuous almost surely, we can say \eqref{eqn:444pm17may} holds uniformly on $[0,\widehat{\tau}+1]$. Therefore taking $t  = \tau_n$ (this is less than $\widehat{\tau}+1$ for large $n$ almost surely), we get
\begin{equation}
    \begin{aligned}
    H_A(\tau_n)&= n h_A^n(e^{-\tau_n})+ n^{1/2} Z_{HA}(\widehat{\tau}\wedge \tau_n) +o(n^{1/2})\\
    &= n h_A^n(e^{-\tau_n})+ n^{1/2} Z_{HA}(\widehat{\tau}) +o(n^{1/2}),
    \end{aligned}
\end{equation}
where using continuity of $Z_{HA}$, we absorb the error term in $o(n^{1/2})$. Since $H_A(\tau_n) =-1$, therefore 
\begin{equation}
\label{eqn:529pm17may22}
    h_A^n(e^{-\tau_n}) = n^{-1/2}Z_{HA}(\widehat{\tau}) +o(n^{-1/2}).
\end{equation}

Using the Mean-Value theorem, we can write for some $\xi_n \in [\widehat{z}_n,z_n ]$ or $[z_n,\widehat{z}_n ]$,
\begin{equation}
\label{eqn:530pm17may22}
h_A^n(e^{-\tau_n})=h_A^n(e^{-\tau_n}) - h_A^n(e^{-\widehat{\tau}_n}) = h_A^n(z_n) - h_A^n(\widehat{z}_n) = {h_A^n}'(\xi_n)(z_n - \widehat{z}_n).
\end{equation}
We have $z_n\to \widehat{z}$ and $\widehat{z}_n\to \widehat{z}$, and therefore $\xi_n \to \widehat{z}$. Now, from ${h_A^n}' \to {h_A}'$ uniformly, we get ${h_A^n}'(\xi_n)\to {h_A}'(\widehat{z})=\alpha $. Therefore \eqref{eqn:529pm17may22}, and \eqref{eqn:530pm17may22} we get
\begin{equation}
    \label{eqn:532pm17may22}
    z_n-\widehat{z}_n = n^{-1/2}\frac{1}{\alpha} \left(Z_{HA}(\widehat{\tau})+o(1)\right).
\end{equation}
Now, we use the Mean value theorem on $A_n$ for some $\xi_n \to \widehat{z}$, and \eqref{eqn:532pm17may22} to obtain
\begin{equation}
    \label{eqn:552pm17may22}
    \begin{aligned}
    n^{-1/2}A_n(\tau_n)&= n^{1/2}\widehat{a}_n(\tau_n)+ Z_A(\widehat{\tau})+o(1) \\
    &= n^{1/2}\widehat{a}_n(\widehat{\tau})) + n^{1/2}{\widehat{a}_n}'(e^{-\xi_n})(z_n - \widehat{z}_n)+ Z_A(\widehat{\tau})+o(1) \\
    &= n^{1/2}\widehat{a}_n(\widehat{\tau}) + \frac{{\widehat{a}'}(\widehat{\tau})}{\alpha} Z_{HA}(\widehat{\tau})+ Z_A(\widehat{\tau})+o(1).
    \end{aligned}
\end{equation}
In the third equality we have used the fact that ${\widehat{a}_n}' \to {\widehat{a}}'$ uniformly in $t$. The proof is thus complete since $|\cA^*_n|= A_n(\tau_n)$. We end this section by presenting the proof of Proposition~\ref{prop:446pm17may22}.

\begin{proof}[Proof of Proposition~\ref{prop:446pm17may22}]
First note that we can write the number of (uninfected) $B$ balls at time $t$ as
\begin{equation*}
H_B(t)= \sum_{d=0}^{\infty}\sum_{\theta=1}^{\infty}\sum_{ \ell= d-\theta+1}^d \ell B_{d, \theta,\ell}(t)=\sum_{d=0}^{\infty}\sum_{\theta=1}^{\infty}(d-\theta+1)\widetilde{B}_{d,\theta, d-\theta+1} +  \sum_{d=0}^{\infty}\sum_{\theta=1}^{\infty}\sum_{ \ell= d-\theta+2}^d\widetilde{B}_{d,\theta, \ell}.
\end{equation*}
This gives
\begin{equation}
    \label{eqn:336pm16may22}
   H_A(t)= W_n(t)-H_B(t)= W_n(t) -\sum_{d=0}^{\infty}\sum_{\theta=1}^{\infty}(d-\theta+1)\widetilde{B}_{d,\theta, d-\theta+1} -  \sum_{d=0}^{\infty}\sum_{\theta=1}^{\infty}\sum_{ \ell= d-\theta+2}^d\widetilde{B}_{d,\theta, \ell}(t).  
\end{equation}
We wish to prove a joint central limit theorem for $(A_n(t), H_A(t))$. Let us first outline the procedure. 
We have already expressed $\widehat{B}_{d,\theta, \ell}(t)$ (which is the centered and scaled version of $\widetilde{B}_{d,\theta, \ell}(t)$) in terms of $\widetilde{M}_{d,\theta,\ell}(t)$'s in \eqref{eqn:satfeb22740pm}, and therefore $H_A(t)$ is also implicitly a linear combination of $W_n(t)$, and $\widetilde{M}_{d,\theta,\ell}(t)$'s (since $\widetilde{B}_{d,\theta, \ell}(t)$'s are also linear combinations of $\widetilde{M}_{d,\theta,\ell}(t)$). Therefore once we can prove that the joint distribution $(W_n, \widetilde{M}_{d,\theta,\ell})$ is Gaussian, we can derive the joint distribution of $(A_n(t), H_A(t))$ (since both $A_n(t)$, $H_A(t)$ are linear combination of independent random variables which are jointly Gaussian).  To do that, let us define $\widetilde{\widehat{W}}_n(t)= n^{-1/2}\left(\widehat{W}_n(t)-\widehat{W}_n(0)\right) $, and note that using \eqref{eqn:monoct7746pm} we get for every $t \in [0, T\wedge \tau_n]$,
\begin{equation}
    [\widetilde{\widehat{W}}_n, \widetilde{\widehat{W}}_n]_{t}=  2\lambda(e^{2t}-1) +o_p(1).
\end{equation}
Therefore using Proposition \ref{prop:martingale} on the stopped process
\begin{equation*}
    \widetilde{\widehat{W}}_n(t\wedge \tau_n') \tod \widehat{Z}_W(t\wedge t_0),
\end{equation*}
as $n\to \infty$, where $\widehat{Z}_W$ is a continuous Gaussian process with mean zero, and variance $2\lambda(e^{2t}-1)$. Now since $\widehat{W}_n(t) : = e^{2t} W_n(t)$, we get that
\begin{equation}
\label{eqn:630pm16may22}
    n^{-1/2}\left({W}_n(t\wedge \tau_n')-e^{-2t}{W}_n(0)\right) \tod {Z}_W(t\wedge t_0),
\end{equation}
as $n\to \infty$, where $Z_W$ is a continuous Gaussian process with mean zero, and variance $2\lambda(e^{-2t}-e^{-4t})$. Moreover, for $t\in [0, T\wedge \tau_n]$, $\ell \geq d-\theta+1$, and $\theta\geq 1$
\begin{equation}\label{eqn:531pm16may22}
\begin{aligned}
     \left[\widetilde{\widehat{W}}_n, \widetilde{M}_{d,\theta,\ell}\right]_t &=\frac{1}{n}\sum_{0<s\leq t}{ \Delta \widehat{W}_n(s)\Delta {M}_{d,\theta,\ell}(s)} = \frac{1}{n}\sum_{0<s\leq t} \Delta \widehat{W}_n(s)\Delta \widehat{B}_{d,\theta,\ell}(s) \\
    & = \frac{1}{n}\sum_{0<s\leq t} e^{(2+\ell)s} \Delta W_n(s)\Delta \widetilde{B}_{d,\theta,\ell}(s).
\end{aligned}     
\end{equation}
Now when $\widetilde{B}_{d,\theta,\ell}$ jumps by $1$ for some $\ell \geq d-\theta+1$, the $W_n$ jumps by $-2$, therefore using \eqref{eqn:612pm16may22}, \eqref{eqn:531pm16may22} yields
\begin{equation}
    \left[\widetilde{\widehat{W}}_n, \widetilde{M}_{d,\theta,\ell}\right]_t = \frac{1}{n}\int_{0}^t2 e^{(2+\ell)s}  \,d (- \widetilde{B}_{d,\theta,\ell}(s)) = \int_{0}^t2 e^{(2+\ell)s}  \,d (-\beta(d,e^{-s},\ell)) +o_p(1).
\end{equation}
Therefore using Proposition \ref{prop:martingale} we get the joint convergence of $(\widetilde{M}_{d,\theta,\ell}(t\wedge \tau_n'),\widetilde{\widehat{W}}_n(t\wedge \tau_n'))$ for $\ell\geq d-\theta+1$. Thus combining \eqref{eqn:630pm16may22}, the joint convergence of $(\widetilde{M}_{d,\theta,\ell}(t\wedge \tau_n'),\widetilde{\widehat{W}}_n(t\wedge \tau_n'))$ for $\ell\geq d-\theta+1$, and \eqref{eqn:240pm16may22} we get   
\begin{equation}
    n^{-1/2}\left(A_n(t\wedge \tau_n')- n\widehat{a}_n(t),H_A(t\wedge \tau_n')- n h_A^n(e^{-t})\right) \tod (Z_A(t\wedge \tau_n'), Z_{HA}(t\wedge \tau_n') ), 
\end{equation}
as $n\to \infty$, where $\widehat{a}_n(t) = 1-\frac 1 n \sum_{d=1}^{\infty}\sum_{\theta=1}^{d} u_n(d,\theta) \beta(d,e^{-t},d-\theta+1)$,  
\begin{equation}
\label{eqn:912pm16may22}
\begin{aligned}
     h_A^n(e^{-t}) = e^{-2t}\frac{2m_n}{n}&-\frac 1 n \sum_{d=0}^{\infty}\sum_{\theta=1}^{\infty}(d-\theta+1)u_n(d,\theta) \beta(d,e^{-t},d-\theta+1)\\
     &-\sum_{d=0}^{\infty}\sum_{\theta=1}^{\infty}\sum_{\ell=d-\theta+2}^{d}u_n(d,\theta) \beta(d,e^{-t},\ell),
     \end{aligned}
\end{equation}
and then
\begin{equation}
\label{eqn:911pm16may22}
    Z_{HA}(t):= Z_W(t)-\sum_{d=0}^{\infty}\sum_{\theta=1}^{\infty}(d-\theta+1)\widetilde{Z}_{d,\theta,d-\theta+1} -  \sum_{d=0}^{\infty}\sum_{\theta=1}^{\infty}\sum_{ \ell= d-\theta+2}^d \widetilde{Z}_{d,\theta,\ell}(t).
\end{equation}

We can ignore the contribution of the tail in the centered and scaled version of \eqref{eqn:336pm16may22} using an almost identical argument as Lemma \ref{lem:418pm27feb21}. Similarly, we can also ensure that the limit $Z_{HA}(t)$ is continuous. This completes the proof of Proposition~\ref{prop:446pm17may22}. 
\end{proof}

%

\subsection*{Acknowledgment}We thank the referee for a very detailed report which significantly improved the quality of this article. 
E. B. is partially supported by the National Science Foundation under grant DMS-2106556 and by the Susan M. Smith chair. S. C. is partially supported by the Netherlands Organisation for Scientific Research (NWO) through Gravitation-grant NETWORKS-024.002.003.
\bibliographystyle{plain}
\bibliography{CLT_bootstrap.bib}

\end{document}